\documentclass[reqno, 10pt]{amsart}
\usepackage{mathrsfs}
\usepackage{amssymb}
\usepackage{latexsym}
\usepackage{yfonts}
\usepackage{amssymb,amsmath,euscript}
\usepackage{color}
\usepackage[toc,page]{appendix}

\usepackage{amssymb,amsmath,amsthm,mathdesign}
\usepackage{pdfsync}
\usepackage{color}

\usepackage{graphicx,color}

\bibliographystyle{plain}
\usepackage{amssymb,amsmath,amsthm,mathdesign}
\usepackage{pdfsync}
\usepackage{color}
\newtheorem{tm}{Theorem}[section]
\newtheorem{prop}[tm]{Proposition}
\newtheorem{defin}[tm]{Definition} 
\newtheorem{coro}[tm]{Corollary}
\newtheorem{lem}[tm]{Lemma}

\numberwithin{equation}{section}

\newcommand{\RR}[1]{\mathbb{#1}}

\newcommand{\rd}{{\mathbb R^d}}

\newcommand{\rr}{{\mathbb R}}

\def\R{{\mathbb R}}
\def\N{{\mathbb N}}

\def\a{\alpha}

\def\E{{\mathbb E}}
\def\P{{\mathbb P}}

\allowdisplaybreaks

\begin{document}

\title[Intermittency fronts  ]{Intermittency fronts for space-time fractional stochastic partial differential equations in $(d+1)$ dimensions}

\author{Sunday A. Asogwa}
\address{Department of Mathematics and Statistics, Auburn University, Auburn, AL 36849, USA}
\email{saa0020@auburn.edu}

\author{ Erkan Nane}
\address{Department of Mathematics and Statistics, Auburn University, Auburn, AL 36849, USA}
\email{nane@auburn.edu}

\keywords{}

\date{\today}

\subjclass[2000]{}

\begin{abstract}
We consider  time fractional  stochastic heat type equation
$$\partial^\beta_tu_t(x)=-\nu(-\Delta)^{\alpha/2} u_t(x)+I^{1-\beta}_t[\sigma(u)\stackrel{\cdot}{W}(t,x)]$$ in $(d+1)$ dimensions, where  $\nu>0$, $\beta\in (0,1)$, $\alpha\in (0,2]$, $d<\min\{2,\beta^{-1}\}\a$,  $\partial^\beta_t$ is the Caputo fractional derivative, $-(-\Delta)^{\alpha/2} $ is the generator of an isotropic stable process,  $\stackrel{\cdot}{W}(t,x)$ is space-time white noise, and $\sigma:\R \to\RR{R}$ is Lipschitz continuous.  Mijena and  Nane proved in \cite{JebesaAndNane1} that : (i) absolute moments of the solutions of this equation grows exponentially; and (ii) the distances to the origin of the farthest high peaks of those moments grow exactly linearly with time. The last result was proved under the assumptions $\alpha=2$ and $d=1.$    In this paper we extend this result to the case $\alpha=2$ and $d\in\{1,2,3\}.$

\end{abstract}

\keywords{Caputo fractional derivative, time fractional SPDE, intermittency, intermittency fronts}

\maketitle

\section{Introduction}
Recently time-fractional  diffusion equations were studied by researchers in many  applied and theoretical fields of science and engineering. A typical form of the time fractional diffusion equations is $\partial^\beta_tu=\nu \Delta u$  with $\beta\in (0,1)$.
These equations are related with anomalous diffusions or diffusions in non-homogeneous media, with random fractal structures; see, for instance, \cite{meerschaert-nane-xiao}.
The Caputo fractional derivative  defined first by Caputo \cite{Caputo} denoted by $\partial^\beta_t$ is defined for $0<\beta<1$ by
\begin{equation}\label{CaputoDef}
\partial^\beta_t u_t(x)=\frac{1}{\Gamma(1-\beta)}\int_0^t \frac{\partial
u_r(x)}{\partial r}\frac{dr}{(t-r)^\beta} .
\end{equation}
Its Laplace transform is
\begin{equation}\label{CaputolT}
\int_0^\infty e^{-st} \partial^\beta_t u_t(x)\,dt=s^\beta \tilde u_s(x)-s^{\beta-1} u_0(x),
\end{equation}
where $\tilde u_s(x) = \int_0^\infty e^{-st}u_t(x)dt$ and incorporates the initial value in the same way as the first
derivative.

For some deep and rigorous mathematical approaches  to time fractional diffusion (heat type) equations see \cite{Koc89,NANERW, Nig86, Wyss86}. The stochastic solutions to fractional diffusion equations can be realized through time-change by inverse stable subordinators and therefore we obtain time-changed  processes.   A couple of recent works in this field are \cite{mnv-09, meerschaert-nane-xiao, meerschaert-straka, OB09}.

\noindent Let $\gamma>0$, define the fractional integral by
$$I^{\gamma}_tf(t):=\frac{1}{\Gamma(\gamma)} \int _0^t(t-\tau)^{\gamma-1}f(\tau)d\tau.$$
For every  $\gamma>0$, and   $g\in L^\infty(\rr_+)$ or $g\in C(\rr_+)$, we have the following relation
$$ \partial _t^\gamma I^\gamma_t g(t)=g(t).$$

Mijena and Nane \cite{nane-et-al-2014} have given an argument using the time fractional Duhamel's principle to obtain the following equation:
 \begin{equation}\label{tfspde}
\begin{split}
 &\partial^\beta_tu_t(x)=-\nu(-\Delta)^{\alpha/2} u_t(x)+I^{1-\beta}_t[\sigma(u)\stackrel{\cdot}{W}(t,x)],\ \ t> 0,\, x\in\R^d;\\
 &u_t(x)|_{t=0}=u_0(x),
 \end{split}
 \end{equation}
where the initial datum $u_0$ is $L^p(\Omega)$-bounded ($p\ge 2$), that is,
\begin{equation}\label{Eq:Initial}
\sup_{x\in\R^d}\E[|u_0(x)|^p]<\infty,
\end{equation}
 $-(-\Delta)^{\alpha/2} $ is the fractional Laplacian with $\alpha\in (0,2]$, and $\stackrel{\cdot}{W}(t,x)$ is a  space-time white noise with $x\in \R^d$, modeling the random effects.
The fractional integral above in equation \eqref{tfspde} when $\sigma(u)=1$ for  functions $\phi \in L^2(\R^d)$   is defined as
 $$\int_{\R^d} \phi(x)I^{1-\beta}_t[\stackrel{\cdot}{W}(t,x)]dx= \frac{1}{\Gamma(1-\beta)} \int_{\R^d}\int _0^t(t-\tau)^{-\beta}\phi(x) W(d\tau, dx)$$
it is well defined only when $0<\beta<1/2$. It is a type of Rieman-Liouville process.

 It would be nice to consider the equation \eqref{tfspde} with the space-time white noise without the fractional integral.  For  related time fractional stochastic  equations  with different noise terms see \cite{Le-chen1, Le-chen2, Le-chen3, hu-hu-15}.


The noise $\stackrel{\cdot}{W}(t,x)$ is a space-time white noise  with $x\in \R^d$, which is assumed to be adapted with respect to a filtered probability space $(\Omega, \mathcal{F},  \mathcal{F}_t, \P)$, where $ \mathcal{F}$ is complete and the filtration $\{\mathcal{F}_t, t\geq 0\}$ is right continuous.

Let  $G_t(x)$  denote the heat kernel of the time fractional heat type equation
\begin{equation}\label{Eq:Green0}
\partial^\beta_t G_t(x)=-\nu(-\Delta)^{\alpha/2}G_t(x).
\end{equation}

The existence and uniqueness of the solution to (\ref{tfspde}) has been studied by  Mijena and Nane  \cite{nane-et-al-2014} under global Lipchitz conditions on $\sigma$, using the white noise  approach of  Walsh \cite{walsh}:
We say that an $\mathcal{F}_t$-adapted random field $\{u(t,x),\,t\ge 0,\,x\in\R^d\}$ is said to be a mild solution of (\ref{tfspde}) with initial value $u_0$ if the following integral equation is fulfilled
\begin{equation}\label{Eq:Mild}
u_t(x)=\int_{\R^d}u_0(y)G_t(x-y)dy+\int_0^t\int_{\R^d}\sigma(u_r(y))G_{t-r}(x-y)W(drdy).
\end{equation}

Let $T$ be a fixed positive number, and let $B_{T,p}$ denote the family of all $\mathcal{F}_t$-adapted random fields $\{u_t(x),\,t\in [0,\,T],\,x\in\R^d\}$ satisfying
\begin{equation}\label{Eq:SquareInt}
\sup_{x\in\R^d}\sup_{t\in[0,\,T]}\E\left[|u_t(x)|^p\right]<\infty,
\end{equation}
with the convention that $B_{T,2}=B_T$. It is easy to check that for each fixed $T$ and $p,$ $B_{T,p}$ is a Banach space.

Mijena and Nane \cite{nane-et-al-2014} proved the existence and uniqueness result  for the equation \eqref{tfspde} when
 $d<\min\{2,\beta^{-1}\}\a$:  equation (\ref{tfspde}) subject to (\ref{Eq:Initial}) and global Lipschitz condition on $\sigma$ has an a.s.-unique solution $u_t(x)$ that satisfies that for all $T>0,$
$u_t(x)\in B_{T,p}.$

 A related  time-fractional SPDE was studied by Chen  et al. \cite{chen-kim-kim-2014}. They have proved existence, uniqueness and regularity of the solutions to the time-fractional parabolic type SPDEs using cylindrical Brownian motion in Banach spaces, in line with the methods in \cite{daPrato-Zabczyk}. For a comparison of the two approaches to SPDE's see the paper by Dalang and Quer-Sardanyons \cite{Dalang-Quer-Sardanyons}.

In this paper we study intermittency fronts for the solution of the stochastic equation \eqref{tfspde}.
 We adopt the definition given in \cite[Chapter 7]{khoshnevisan-cbms}:
 The random field $u_t(x)$
is called  intermittent if $\inf_{z\in \rd}|\sigma(z)|>0$, and
$\eta_k(x)/k$ is strictly increasing for $k\geq 2$ for all $x\in \rd$, where
\begin{equation}\label{lyapunov-x}
\eta_k(x):=\liminf_{t\to\infty}\frac1t\log\E(|u_t(x)|^k).
\end{equation}

The following observation of Carmona and Molchanov \cite[Theorem 3.1.2]{carmona-molchanov} gives a sufficient condition for intermittency: see \cite[Proposition 7.2]{khoshnevisan-cbms} for a proof of the next proposition.

  \begin{prop}
  \label{intermittence-sufficient}
  If $\eta(k)<\infty$ for all sufficiently  large $k$, then the function $\eta$ is well-defined and convex on $(0,\infty)$. Moreover, If $\eta(k_0)>0$ for some $k_0>1$, then $k\to k^{-1}\eta(k) $ is strictly increasing on $[k_0,\infty)$
  \end{prop}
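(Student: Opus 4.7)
The plan is to exploit the log-convexity of $k\mapsto\E(|u_t(x)|^k)$ supplied by H\"older's inequality and then push this convexity through the $\liminf$ operator. Writing $g_t(k):=\log\E(|u_t(x)|^k)$, H\"older gives that for every $t,x$ and every $\theta\in[0,1]$ with $k=\theta k_1+(1-\theta)k_2$,
$$g_t(k)\le\theta\,g_t(k_1)+(1-\theta)\,g_t(k_2),$$
so $g_t$ is convex on its domain of finiteness. The hypothesis $\eta(k)<\infty$ for large $k$ together with the convexity and monotonicity of $g_t$ propagates finiteness to all $k\in(0,\infty)$, yielding that $\eta$ is well-defined as a $[-\infty,\infty)$-valued function. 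Setting $f_t:=g_t/t$, to derive convexity of $\eta$ I would pick $t_n\to\infty$ realising $f_{t_n}(k)\to\eta(k)$ and, using finiteness of $\eta(k_1),\eta(k_2)$, extract a sub-sequence along which $f_{t_n}(k_1)$ and $f_{t_n}(k_2)$ converge in $[-\infty,\infty]$; the pointwise convexity inequality for $f_{t_n}$ then passes to the limit and gives $\eta(k)\le\theta\,\eta(k_1)+(1-\theta)\,\eta(k_2)$.

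For the second assertion, observe that $g_t(0)=\log 1=0$, so $g_t$ is a convex function vanishing at the origin. An elementary fact about such functions says that $k\mapsto g_t(k)/k$ is non-decreasing on $(0,\infty)$; dividing by $t$ and taking $\liminf_{t\to\infty}$ preserves this inequality, so $k\mapsto\eta(k)/k$ is non-decreasing. To upgrade to strict monotonicity on $[k_0,\infty)$ under the assumption $\eta(k_0)>0$, suppose for contradiction that $\eta(k_1)/k_1=\eta(k_2)/k_2=:c$ for some $k_0\le k_1<k_2$. Convexity of $\eta$ together with $\eta(0)=0$ forces $\eta$ to be linear with slope $c>0$ throughout $[0,k_2]$: any point in $[k_1,k_2]$ lies on or above the chord from $(0,0)$ to $(k_2,ck_2)$ by monotonicity of $\eta(k)/k$, and on or below it by convexity, hence all inequalities are equalities. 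One then contradicts this linear behaviour by invoking the strict log-convexity of $g_t$ coming from the Cauchy--Schwarz inequality (which is strict whenever $|u_t(x)|$ is not almost surely constant, a fact guaranteed here by the white-noise-driven term).

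The principal obstacle I expect is the transfer of convexity, and especially strict convexity, through the $\liminf$. Since $\liminf$ of convex functions is not convex in general, one must combine the finiteness hypothesis with a careful diagonal subsequence extraction to justify passing to the limit in the H\"older inequality; the same difficulty prevents the pointwise strict inequality $g_t(k_1)/k_1<g_t(k_2)/k_2$ from surviving naively under $\liminf$. The workaround in the strict-monotonicity step is indirect: one uses equality at two values $k_1<k_2$ to force a rigid structural conclusion (global linearity of $\eta$ with slope $c$ on $[0,k_2]$), and only then contradicts it with a quantitative curvature estimate on $g_t$ supplied by the non-degeneracy of $u_t(x)$.
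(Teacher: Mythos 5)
The paper does not actually prove this proposition; it quotes it from Carmona--Molchanov and refers to Proposition 7.2 of the cited CBMS notes, so I am comparing your argument with that standard proof. Your backbone is right: log-convexity of $k\mapsto\E(|u_t(x)|^k)$ via H\"older, $g_t(0)=0$, and the observation that $k\mapsto g_t(k)/k$ is nondecreasing for each fixed $t$ and that this single comparison survives the $\liminf$. But two of your steps have genuine gaps. The first concerns convexity of $\eta$: your subsequence extraction produces the inequality in the wrong direction. If $t_n\to\infty$ realises $\liminf_t g_t(k)/t$ and along a further subsequence $g_{t_n}(k_i)/t_n\to L_i$, then $L_i\ge\eta(k_i)$ (a subsequential limit dominates the full $\liminf$), so passing H\"older to the limit only yields $\eta(k)\le\theta L_1+(1-\theta)L_2$, which does not bound $\eta(k)$ by $\theta\eta(k_1)+(1-\theta)\eta(k_2)$. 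This is not a repairable technicality: the $\liminf$ of convex functions need not be convex (consider $f_n(x)=(-1)^nx$), and the one-line proof of convexity works only for the $\limsup$ exponent (where subadditivity of $\limsup$ does the work) or when the limit is assumed to exist, which is how the cited sources phrase it.

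The second gap is in the strict-monotonicity step. Your plan forces $\eta$ to be linear on $[0,k_2]$ (which already uses the unproven convexity) and then tries to contradict linearity using strict log-convexity of each $g_t$. That contradiction does not materialise: strict convexity of $g_t$ for each fixed $t$ gives no curvature of $g_t/t$ that survives $t\to\infty$; for instance $g_t(k)=ckt+\log(1+k^2)$ is strictly convex for every $t$ yet $g_t/t\to ck$ is linear. Worse, the purely abstract statement you are aiming at is false: the deterministic field $u_t\equiv e^t$ has $\eta(k)=k$, convex with $\eta(0)=0$ and $\eta(2)>0$, but $\eta(k)/k$ is constant. The missing ingredient --- and the reason the hypothesis reads $k_0>1$ --- is the structural fact $\limsup_{t\to\infty}t^{-1}\log\E|u_t(x)|\le0$ (for the nonnegative mild solution, $\E|u_t(x)|=\E u_t(x)=(G_t\ast u_0)(x)$ is bounded). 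Granting this, one interpolates at the triple $(1,a,b)$ rather than $(0,a,b)$: for $k_0\le a<b$, H\"older gives $g_t(a)\le\frac{b-a}{b-1}g_t(1)+\frac{a-1}{b-1}g_t(b)$ pointwise in $t$, and taking $\liminf$ (using $\liminf(x_t+y_t)\le\limsup x_t+\liminf y_t$) yields $\eta(a)\le\frac{a-1}{b-1}\eta(b)<\frac{a}{b}\eta(b)$, since $\eta(b)\ge(b/k_0)\eta(k_0)>0$ and $\frac{a-1}{b-1}<\frac{a}{b}$ for $1<a<b$. This is the Carmona--Molchanov argument; it requires neither convexity of $\eta$ itself, nor strict convexity, nor any non-degeneracy of the noise.
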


\begin{tm}[\cite{JebesaAndNane1}]\label{intermittency-thm}
Let  $d<\min\{2,\beta^{-1}\}\a$.
If $\inf_{z\in \rd}|u_0(z)|>0$, then $$\inf_{x\in \rd}\eta_2(x)\geq [C^\ast(L_\sigma)^2\Gamma(1-\beta d/\a)]^{\frac{1}{(1-\beta d/\a)}}$$
where
\begin{equation}\label{conecondition}
L_\sigma:=\inf_{z\in \rd}|\sigma(z)/z|.
\end{equation}
Therefore, the solution  $u_t(x)$ of \eqref{tfspde} is weakly intermittent when $\inf_{z\in \rd}|u_0(z)|>0$ and $L_\sigma>0$.
\end{tm}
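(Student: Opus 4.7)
The plan is to work from the mild formulation \eqref{Eq:Mild} and turn it into a renewal-type integral inequality for the infimum of the second moments, which then yields the exponential growth rate via Laplace transform.

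First I would square and apply Walsh's isometry to obtain
\begin{equation*}
\E[|u_t(x)|^2] \;=\; \bigl((G_t\ast u_0)(x)\bigr)^2 + \int_0^t\!\!\int_{\R^d} \E[\sigma(u_r(y))^2]\,G_{t-r}(x-y)^2\,dy\,dr.
\end{equation*}
Since $G_t$ is a probability density (the time-fractional heat kernel is nonnegative with total mass one in $x$), the deterministic term is bounded below by $c_0^2:=(\inf_z u_0(z))^2>0$. The hypothesis \eqref{conecondition} gives $\sigma(z)^2\ge L_\sigma^2 z^2$, so
\begin{equation*}
\E[|u_t(x)|^2] \;\ge\; c_0^2 + L_\sigma^2 \int_0^t\!\!\int_{\R^d} \E[|u_r(y)|^2]\,G_{t-r}(x-y)^2\,dy\,dr.
\end{equation*}
Setting $f(t):=\inf_{x\in\R^d}\E[|u_t(x)|^2]$ and using translation invariance of $G_t$ in its spatial argument after a change of variables $z=x-y$, I would then take the infimum over $x$ to obtain
\begin{equation*}
f(t)\;\ge\; c_0^2 + L_\sigma^2 \int_0^t f(r)\,\mathcal{K}(t-r)\,dr, \qquad \mathcal{K}(s):=\int_{\R^d} G_s(z)^2\,dz.
\end{equation*}

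The next step is to compute the scaling of $\mathcal{K}(s)$. Using the subordination representation $G_t(x)=\int_0^\infty p_u^{(\alpha)}(x)\,f_{E_t}(u)\,du$, where $p_u^{(\alpha)}$ is the isotropic $\alpha$-stable heat kernel and $E_t$ is the inverse $\beta$-stable subordinator, and exploiting the scaling $p_u^{(\alpha)}(x)=u^{-d/\alpha}p_1^{(\alpha)}(x u^{-1/\alpha})$, a change of variables produces an identity of the form $\mathcal{K}(s)=C^\ast s^{-\beta d/\alpha}$ for an explicit constant $C^\ast$ depending on $\alpha,\beta,d,\nu$; the integrability condition $d<\min\{2,\beta^{-1}\}\alpha$ is precisely what is needed to ensure $\mathcal{K}(s)<\infty$ and the integral in $r$ converges near the singularity at $r=t$.

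Substituting the explicit form of $\mathcal{K}$ gives the fractional Volterra inequality
\begin{equation*}
f(t)\;\ge\; c_0^2 + C^\ast L_\sigma^2 \int_0^t f(r)(t-r)^{-\beta d/\alpha}\,dr.
\end{equation*}
Finally I would take Laplace transforms in $t$: the convolution turns into multiplication by $\Gamma(1-\beta d/\alpha)\,s^{-(1-\beta d/\alpha)}$, giving
\begin{equation*}
\tilde f(s)\;\ge\;\frac{c_0^2}{s} + C^\ast L_\sigma^2\,\Gamma(1-\beta d/\alpha)\,s^{-(1-\beta d/\alpha)}\,\tilde f(s).
\end{equation*}
The transform $\tilde f$ must diverge at every $s$ for which the coefficient of $\tilde f(s)$ on the right equals one or exceeds it, i.e.\ for all
$s\le s^\ast:=\bigl[C^\ast L_\sigma^2\,\Gamma(1-\beta d/\alpha)\bigr]^{1/(1-\beta d/\alpha)}$.
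Since the abscissa of convergence of $\tilde f$ equals $\liminf_{t\to\infty}t^{-1}\log f(t)=\inf_x\eta_2(x)$, this forces $\inf_x\eta_2(x)\ge s^\ast$, which is exactly the bound claimed. Weak intermittency then follows from Proposition \ref{intermittence-sufficient} because $\eta_2>0$.

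The main obstacle I expect is step two: pinning down the sharp constant $C^\ast$ and carefully verifying that $\mathcal{K}(s)\sim C^\ast s^{-\beta d/\alpha}$ (not just $\lesssim$). The subordination formula plus the self-similarity of the stable kernel makes this a calculation rather than a conceptual difficulty, but keeping track of $\Gamma$-factors from the density of the inverse stable subordinator is where the explicit $\Gamma(1-\beta d/\alpha)$ in the final expression is born, so sloppiness there would spoil the sharp constant. The Laplace-transform implication (the last step) also requires a mild justification that $\tilde f(s)$ is finite for some $s$ large enough to make the argument rigorous; an a priori $L^2$ bound on $B_{T,2}$ growth over compact intervals, inherited from the existence theorem of \cite{nane-et-al-2014}, handles this.
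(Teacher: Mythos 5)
Your derivation of the renewal inequality is sound and is essentially the route of the cited proof (which follows \cite{foondun-khoshnevisan-09} and \cite[Ch.~7]{khoshnevisan-cbms}): Walsh isometry applied to the mild form \eqref{Eq:Mild}, the lower bound $(G_t\ast u_0)(x)\ge \inf u_0$ from the fact that $G_t$ has unit mass, the cone condition $\sigma(z)^2\ge L_\sigma^2 z^2$, and the identity $\int_{\R^d}G_s^2(z)\,dz=C^\ast s^{-\beta d/\alpha}$, which is not an asymptotic you need to re-derive but exactly Lemma \ref{Lem:Green1}. Up to and including the Volterra inequality $f(t)\ge c_0^2+C^\ast L_\sigma^2\int_0^t f(r)(t-r)^{-\beta d/\alpha}\,dr$, everything checks out.

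The gap is in the last step. The assertion that the abscissa of convergence of $\tilde f$ equals $\liminf_{t\to\infty}t^{-1}\log f(t)$ is false in the direction you need: for a nonnegative $f$ one only has $\liminf_{t\to\infty}t^{-1}\log f(t)\le\sigma_c\le\limsup_{t\to\infty}t^{-1}\log f(t)$ (and even the upper comparison requires ruling out thin spikes). So divergence of $\tilde f(s)$ for all $s\le s^\ast$ gives you at best a $\limsup$-type statement, whereas $\eta_2$ in \eqref{lyapunov-x} is a $\liminf$, and the theorem's conclusion genuinely requires the $\liminf$ bound. The standard repair, and what the cited proof actually does, is to avoid the abscissa argument entirely: iterate the renewal inequality to get the pointwise bound $f(t)\ge c_0^2\sum_{n\ge0}\rho^n K^{\ast n}(t)$ with $\rho=C^\ast L_\sigma^2$ and $K(s)=s^{-\beta d/\alpha}$, compute $K^{\ast n}(t)=\Gamma(1-\beta d/\alpha)^n\,t^{n(1-\beta d/\alpha)-1}/\Gamma(n(1-\beta d/\alpha))$, and read off the exponential rate $[\rho\,\Gamma(1-\beta d/\alpha)]^{1/(1-\beta d/\alpha)}$ from the Mittag--Leffler asymptotics of the resulting series; equivalently, invoke the renewal-theoretic lemma of \cite{foondun-khoshnevisan-09} (see also \cite[Ch.~7]{khoshnevisan-cbms}), which states that $f\ge c+ (g\ast f)$ with $c>0$ and $\int_0^\infty e^{-\lambda t}g(t)\,dt\ge1$ forces $\liminf_{t\to\infty}t^{-1}\log f(t)\ge\lambda$. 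With that substitution the proof is complete; the weak-intermittency conclusion via Proposition \ref{intermittence-sufficient} is then fine.
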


 There is a huge literature on the study of intermittency of SPDEs, see, for example, \cite{foondun-khoshnevisan-09, khoshnevisan-cbms} and the reference therein.

According to the previous theorem the solution develops tall peaks over time which means that $t\to \sup_{x\in \R^d} \E{|u_t(x)|^2}$ grows exponentially rapidly with $t$. There appears another phenomena called intermittency fronts that the distances of the farthest peaks of the moments of the solution to \eqref{tfspde} grow  linearly with time as $\theta t$: if $\theta$ is sufficiently small, then the quantity $\sup_{|x|>\theta t} \E{|u_t(x)|^2}$ grows exponentially quickly as $t\to\infty$; whereas the preceding quantity vanishes exponentially rapidly if $\theta$ is sufficiently large.
In this work, we consider for every $\theta\geq 0$,
\begin{equation}\label{intermittecy-front-L}
\mathscr{L}(\theta):= \limsup_{t\rightarrow \infty}\frac{1}{t}\sup_{|x|>\theta t}\log \E\left(|u_t(x)|^2\right).
\end{equation}
We can think of $\theta_L>0$ as an \textit{intermittency lower front} if $\mathscr{L}(\theta)<0$ for all $\theta > \theta_L,$ and of $\theta_U>0$ as an \textit{intermittency upper front} if $\mathscr{L}(\theta)>0$ whenever $\theta<\theta_U.$

The following is our main theorem which establishes  bounds for $\theta_L$ and $\theta_U$ that extend the results of \cite{conus-khoshnevisan} and \cite{JebesaAndNane1} to the case of $\alpha=2$ and $d\in\{1,2,3\}$ for  time fractional SPDEs with crucial nontrivial changes to the methods in \cite{conus-khoshnevisan, khoshnevisan-cbms}.

\begin{tm}\label{maintheoremtwo}
Suppose that $d< \min\{2,\beta^{-1}\}\alpha, \ \alpha=2,$ \ and  measurable initial function $u_0:\RR{R}^{d}\to\RR{R}_+$  is bounded, has compact support, and is strictly positive in an open subset of $(0,\infty)^{d}$, and  $\sigma$  satisfies
 $\sigma(0)=0$. Then  the time fractional stochastic heat equation \eqref{tfspde} has a positive intermittency lower front. In fact,
\begin{equation}\label{intermittencypart1}
\mathscr{L}(\theta) < 0\ \ \mbox{if}\ \theta > (2\nu)^{1/\beta}(\mbox{Lip}_\sigma c_0)^{2(\frac{2-\beta}{2-\beta d})}.
\end{equation}
In addition, under the cone condition $L_\sigma>0-$where $L_\sigma$ was defined in \eqref{conecondition}-there exists $\theta_0>0$ such that
\begin{equation}\label{intermittencypart2}
\mathscr{L}(\theta) > 0\ \ \mbox{if}\ \theta \in (0,\theta_0).
\end{equation}
That is, in this case, the stochastic heat equation has a finite intermittency upper front.
\end{tm}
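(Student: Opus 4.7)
The plan is to work from the Walsh mild formulation \eqref{Eq:Mild} and use the Walsh isometry to derive a Volterra inequality of the form
\begin{equation*}
\E|u_t(x)|^2 \leq 2|(G_t\ast u_0)(x)|^2 + 2(\mbox{Lip}_\sigma)^2 \int_0^t\!\!\int_{\R^d} G_{t-r}^2(x-y)\,\E|u_r(y)|^2\,dy\,dr,
\end{equation*}
together with a matching lower bound with $L_\sigma^2$ in place of $(\mbox{Lip}_\sigma)^2$ under the cone hypothesis. The dimension restriction $d<\min\{2,\beta^{-1}\}\alpha$ is precisely what ensures $G_t\in L^2(\R^d)$, so both inequalities are finite. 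The entire analysis then reduces to sharp off-diagonal control of the time-fractional Green function $G_t$.

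I next turn to the required kernel estimate. For $\alpha=2$ the kernel admits the subordination representation
\begin{equation*}
G_t(x) = \int_0^\infty (4\pi\nu s)^{-d/2} e^{-|x|^2/(4\nu s)}\,f_E(t,s)\,ds,
\end{equation*}
where $f_E(t,\cdot)$ is the density of the inverse $\beta$-stable subordinator $E_t$. From the tail $\P(E_t>s)\lesssim\exp(-c\,s^{1/(1-\beta)}t^{-\beta/(1-\beta)})$ (derived from the standard small-ball estimate for stable subordinators) together with a Laplace optimisation in $s$, I aim to establish a stretched-Gaussian estimate
\begin{equation*}
G_t(x) \;\leq\; C\,t^{-\beta d/2}\exp\!\Bigl(-c_0\,|x|^{2/(2-\beta)}\,t^{-\beta/(2-\beta)}\Bigr),
\end{equation*}
with $c_0$ the explicit constant appearing in \eqref{intermittencypart1}. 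The critical saddle-point sits at $s^\ast\asymp|x|^{2(1-\beta)/(2-\beta)}t^{\beta/(2-\beta)}$, and the exponents balance so that, evaluated at $|x|=\theta t$, the tail becomes $\exp(-c_0\theta^{2/(2-\beta)}t)$, i.e.\ genuinely linear in $t$.

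Given this bound, the two fronts follow from parallel iteration arguments. For \eqref{intermittencypart1} I introduce $\mathcal{N}_\theta(t):=\sup_{|x|>\theta t}\E|u_t(x)|^2$; since $u_0$ has compact support, the free term inherits the stretched-Gaussian decay on $\{|x|>\theta t\}$, and splitting the spatial integral into the cone $|y|\leq\theta r$ (where Theorem \ref{intermittency-thm} and its companion upper bound give $\E|u_r(y)|^2\leq e^{Kr}$) versus $|y|>\theta r$ (where $\mathcal{N}_\theta(r)$ appears), the off-cone piece acquires an extra factor $\exp(-c_0\theta^{2/(2-\beta)}r)$, while the in-cone piece is handled by the integrability of $G^2$. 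Balancing the two exponents, a Gronwall argument yields $\mathcal{N}_\theta(t)\lesssim e^{-\delta t}$ exactly when $\theta$ exceeds the threshold in \eqref{intermittencypart1}. For \eqref{intermittencypart2} I use $|\sigma(u)|\geq L_\sigma|u|$ to dominate the mild equation below by the linear $\sigma(u)=L_\sigma u$ case and iterate the mild formula starting from a point $x_0$ where $u_0>0$; the Picard series produces a geometric factor $L_\sigma^{2n}$ times an $n$-fold time-convolution of the near-diagonal mass $\int_{|y-x|\leq\rho}G_{t-r}^2(x-y)\,dy \asymp (t-r)^{-\beta d/2}$, whose leading term grows like $e^{\gamma t}$ uniformly for $|x|\leq\theta_0 t$ once $\theta_0$ is small enough not to destroy the near-diagonal mass of $G_{t-r}(x-\cdot)$.

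The principal difficulty will be making the stretched-Gaussian bound on $G_t$ sharp enough in the constant $c_0$ to recover the explicit threshold in \eqref{intermittencypart1}: the estimates for subordinated heat kernels in the literature are either pointwise but loose in the exponent, or $L^p$-type and insensitive to the cone direction. A secondary subtlety is that the Volterra iteration must be carried out for $d\in\{1,2,3\}$ rather than just $d=1$ as in \cite{JebesaAndNane1}: the integrability $\int_{\R^d}G_t^2(x)\,dx<\infty$ needs $d<2/\beta$, and the powers $(t-r)^{-\beta d/2}$ must be tracked carefully so that the Picard iteration still converges in dimensions $2$ and $3$.
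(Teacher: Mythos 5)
Your plan has two genuine gaps, one of which is fatal for exactly the dimensions this paper is about. First, the pointwise stretched-Gaussian bound $G_t(x)\le C\,t^{-\beta d/2}\exp(-c_0|x|^{2/(2-\beta)}t^{-\beta/(2-\beta)})$ is \emph{false} for $d\in\{2,3\}$ when $\beta<1$: from the subordination formula \eqref{Eq:Green1}, $f_{E_t}(s)$ tends to a positive constant as $s\downarrow 0$, so $G_t(0)=\int_0^\infty (4\pi\nu s)^{-d/2}f_{E_t}(s)\,ds=\infty$ once $d\ge 2$ (the kernel has an integrable singularity like $|x|^{-(d-2)}$ at the origin but is not in $L^\infty$). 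Any argument that uses $\sup_z G_{t-r}(z)$ or a pointwise on-diagonal bound therefore breaks precisely in the new cases $d=2,3$; this is why the paper never uses pointwise kernel estimates and instead controls the exponential moment $\int_{\R^d}e^{-c\cdot y}[G_s(y)]^2\,dy$ through the series expansion in Lemmas \ref{mylemma1}--\ref{mylemma2} and the weighted Young inequality of Proposition \ref{propostionforInt}. Relatedly, the $c_0$ in \eqref{intermittencypart1} is not a heat-kernel exponent at all: it is the contraction constant coming from $C_d(c,\gamma,\beta)\mbox{Lip}_\sigma<1$ in Corollary \ref{coro3}, so even a correct stretched-Gaussian bound would not by itself reproduce the stated threshold.

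Second, the bookkeeping in both halves loses the information you need. For \eqref{intermittencypart1}, the moving-cone norm $\mathcal{N}_\theta(t)=\sup_{|x|>\theta t}\E|u_t(x)|^2$ cannot close a Gr\"onwall loop to exponential decay: the self-referential term is $\int_0^t (t-r)^{-\beta d/2}\mathcal{N}_\theta(r)\,dr$, and with $\mathcal{N}_\theta(r)=O(e^{-\delta r})$ its dominant contribution comes from $r$ near $0$ and is only $O(t^{-\beta d/2})$; worse, on the in-cone piece with $r$ near $t$ one has $|x-y|\ge\theta(t-r)\to 0$, so there is no kernel decay to offset the $e^{Kr}$ bound, and the hard cutoff at $|y|=\theta r$ discards the fact that $\E|u_r(y)|^2$ is tiny for $|y|$ large at small times. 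The paper's weight $e^{-\gamma t+c\cdot x}$ retains exactly that spatial decay at every fixed time, which is what makes the fixed-point argument contract. For \eqref{intermittencypart2}, a Picard lower bound built only from near-diagonal mass $\int_{|y-x|\le\rho}G_{t-r}^2(x-y)\,dy$ produces growth near $\mathrm{supp}(u_0)$ but never transports it to points with $|x|\asymp\theta t$; each term of the series must pay for a displacement of order $\theta t$, and your sketch does not quantify that cost against the gain $L_\sigma^{2n}$. The paper resolves this by integrating $\E|u_t(\cdot)|^2$ over the half-spaces $\mathbb{K}^{\pm}_{\theta t}$, using the superadditivity $1_{\mathbb{K}^{+}_{\theta t}}(x)\ge 1_{\mathbb{K}^{+}_{\theta(t-s)}}(x-y)\,1_{\mathbb{K}^{+}_{\theta s}}(y)$ to obtain a genuine renewal inequality $M_{+}\ge(\cdots)+L_\sigma^2(T\ast M_{+})$, and then checking that $(\mathcal{L}T)(\lambda)>L_\sigma^{-2}$ for small $\theta$ and $\lambda$ forces $(\mathcal{L}M)(\lambda)=\infty$. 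You would need some analogue of that half-space averaging (or of the exponential weighting) to make either half of your argument rigorous.
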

This theorem in the case of $d=1$  was proved by Mijena and  Nane \cite{JebesaAndNane1}.
In   the parabolic Anderson model  which is the  stochastic heat equation \eqref{tfspde},  when $\beta=1$ and $\sigma(x)=cx$,  it is now known that there exists a sharp intermittency front, namely $\theta_L = \theta_U$, see the work of Chen and Dalang \cite{chen-dalang}. It would be nice to consider equality of  $\theta_L = \theta_U$ for \eqref{tfspde} when $\beta\in (0,1)$. We will carry out this project in a forthcoming paper.

Next we want to give an outline of the paper. In section 2, we  recall some preliminary results on the subject from the literature. Hence proofs of the results here can be found in the literature, in particular see references therein. Next, we established some useful results that we used in the proof of our main result. Section 3 contains our main result, Theorem \ref{maintheoremtwo}.



\section{Preliminaries}
In this section we give some results about the heat kernel $G_t(x)$  of the time fractional heat type equation \eqref{Eq:Green0},
 and mention some basic facts about the integral (mild) solution of       \eqref{tfspde} in the sense of Walsh \cite{walsh}.
We know that $G_t(x)$ is the density function of $X(E_t)$, where $X$ is an isotropic $\a$-stable L\'evy process in $\R^d$ and $E_t=\inf \{u:\ D(u)>t\}$,  is the first passage time of a $\beta$-stable subordinator $D=\{D_r,\,r\ge0\}$, or the inverse stable subordinator of index $\beta$: see, for example, Bertoin \cite{bertoin} for properties of these processes, Baeumer and Meerschaert \cite{fracCauchy} for more on time fractional diffusion equations, and Meerschaert and Scheffler  \cite{limitCTRW} for properties of the inverse stable subordinator $E_t$.

Let $p_{{X(s)}}(x)$ and $f_{E_t}(s)$ be the density of $X(s)$ and $E_t$, respectively. Then the Fourier transform of $p_{{X(s)}}(x)$ is given by
\begin{equation}\label{Eq:F_pX}
\int _\rd e^{i\xi \cdot x}p_{X(s)}(x)dx=e^{-s\nu|\xi|^\a},
\end{equation}
and
\begin{equation}\label{Etdens0}
f_{E_t}(s)=t\beta^{-1}s^{-1-1/\beta}g_\beta(ts^{-1/\beta}),
\end{equation}
where $g_\beta(\cdot)$ is the density function of $D_1.$ The function $g_\beta(u)$ [cf. Meerschaert and Straka (2013)] is infinitely differentiable on the entire real line, with $g_\beta(s)=0$ for $s\le 0$. 

By using \eqref{Etdens0} and change of variable we can show that

\begin{equation}\label{Moment for g}
\mathbb{E}(D_{1}^{-\beta k})=\E(E_1^k)=\int_{0}^{\infty}w^{-\beta k}g_{\beta}(w)dw
\end{equation}

By conditioning, we have
\begin{equation}\label{Eq:Green1}
G_t(x)=\int_{0}^\infty p_{_{X(s)}}(x) f_{E_t}(s)ds.
\end{equation}

\begin{lem}[Lemma 2.1 in \cite{nane-et-al-2014}]\label{Lem:Green1} For $d < 2\a,$
\begin{equation}\label{Eq:Greenint}
\int_{{\R^d}}G^2_t(x)dx  =C^\ast t^{-\beta d/\a}
\end{equation}
where $C^\ast = \frac{(\nu )^{-d/\a}2\pi^{d/2}}{\a\Gamma(\frac d2)}\frac{1}{(2\pi)^d}\int_0^\infty z^{d/\a-1} (E_\beta(-z))^2 dz.$
\end{lem}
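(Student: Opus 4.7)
The plan is to compute $\int_{\R^d} G_t^2(x)\,dx$ via Plancherel's identity, since the Fourier transform of $G_t$ has a clean closed form in terms of the Mittag-Leffler function. The starting point is the subordination representation \eqref{Eq:Green1} and the characteristic function \eqref{Eq:F_pX}: Fourier-transforming $G_t$ and interchanging the order of integration gives
\begin{equation*}
\widehat{G_t}(\xi) \;=\; \int_0^\infty e^{-s\nu|\xi|^\alpha}\,f_{E_t}(s)\,ds \;=\; \E\!\left[e^{-E_t\,\nu|\xi|^\alpha}\right] \;=\; E_\beta\!\left(-\nu t^\beta|\xi|^\alpha\right),
\end{equation*}
where the last identity is the classical formula for the Laplace transform at $\lambda=\nu|\xi|^\alpha$ of the inverse $\beta$-stable subordinator $E_t$. (This is where $t^\beta$ first appears, and it ultimately drives the factor $t^{-\beta d/\alpha}$.)

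Next I would apply Plancherel to obtain
\begin{equation*}
\int_{\R^d} G_t^2(x)\,dx \;=\; \frac{1}{(2\pi)^d}\int_{\R^d}\bigl[E_\beta(-\nu t^\beta|\xi|^\alpha)\bigr]^2\,d\xi,
\end{equation*}
and then pass to spherical coordinates, using the standard surface-area formula $2\pi^{d/2}/\Gamma(d/2)$ for the unit sphere in $\R^d$, to reduce the integral to a one-dimensional radial integral. The substitution $z=\nu t^\beta r^\alpha$, with $r^{d-1}\,dr = \frac{1}{\alpha}(\nu t^\beta)^{-d/\alpha} z^{d/\alpha-1}\,dz$, factors out the $t$-dependence cleanly and yields exactly
\begin{equation*}
\int_{\R^d}G_t^2(x)\,dx \;=\; \frac{(\nu)^{-d/\alpha}\,2\pi^{d/2}}{\alpha\,\Gamma(d/2)}\cdot\frac{1}{(2\pi)^d}\,t^{-\beta d/\alpha}\int_0^\infty z^{d/\alpha-1}\bigl[E_\beta(-z)\bigr]^2\,dz,
\end{equation*}
which is the claimed identity.

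The main technical point, and the only obstacle that requires the hypothesis $d<2\alpha$, is the finiteness of the constant $C^\ast$, i.e.\ of $\int_0^\infty z^{d/\alpha-1}[E_\beta(-z)]^2\,dz$. At the origin $E_\beta(0)=1$, so the integrand behaves like $z^{d/\alpha-1}$ and is integrable whenever $d>0$. At infinity one uses the standard asymptotic $E_\beta(-z)\sim 1/(\Gamma(1-\beta)\,z)$ as $z\to\infty$, which makes the integrand decay like $z^{d/\alpha-3}$; this is integrable precisely when $d/\alpha<2$, matching the hypothesis. Assuming the Mittag-Leffler asymptotic is taken as known (it is standard and invoked implicitly in \cite{nane-et-al-2014}), the only step requiring care is justifying the interchange of integrals in the Fourier transform of $G_t$, which follows from Fubini since $e^{-s\nu|\xi|^\alpha}f_{E_t}(s)$ is nonnegative.
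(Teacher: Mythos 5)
Your argument is correct and is essentially the proof given in the cited source \cite{nane-et-al-2014}: the paper at hand only quotes the lemma, and the original derivation proceeds exactly as you do, via $\widehat{G_t}(\xi)=E_\beta(-\nu t^\beta|\xi|^\alpha)$, Plancherel, polar coordinates, and the substitution $z=\nu t^\beta r^\alpha$, with the hypothesis $d<2\alpha$ entering only through the tail asymptotic $E_\beta(-z)\sim 1/(\Gamma(1-\beta)z)$ to guarantee finiteness of $C^\ast$. No gaps.
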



\begin{lem}[Lemma 2.2 in \cite{JebesaAndNane1}]\label{laplace-G} For $\lambda\in\rd$ and $\a = 2$,
$$\int_{\rd}e^{\lambda\cdot x}G_s(x)dx = E_\beta(\nu |\lambda|^2 s^\beta).$$
\end{lem}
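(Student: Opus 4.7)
The plan is to unwind the subordination representation of $G_s$ and reduce the Laplace-type integral to the moment generating function of the inverse stable subordinator $E_s$, which is a Mittag-Leffler function in the argument $\nu|\lambda|^2 s^\beta$.

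First I would start with the representation \eqref{Eq:Green1}, namely $G_s(x) = \int_0^\infty p_{X(u)}(x) f_{E_s}(u) du$. Since all factors are non-negative, Tonelli's theorem allows me to swap the two integrals to obtain
\begin{equation*}
\int_{\rd} e^{\lambda\cdot x} G_s(x)\,dx = \int_0^\infty \left( \int_{\rd} e^{\lambda\cdot x} p_{X(u)}(x)\,dx \right) f_{E_s}(u)\,du.
\end{equation*}
For $\alpha = 2$, formula \eqref{Eq:F_pX} identifies $p_{X(u)}$ as a centered Gaussian density with covariance $2\nu u\, I$, so the inner integral is the Gaussian moment generating function $\int_{\rd} e^{\lambda\cdot x} p_{X(u)}(x)\,dx = e^{u\nu|\lambda|^2}$. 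Substituting yields the probabilistic form $\int_{\rd} e^{\lambda\cdot x} G_s(x)\,dx = \E\!\left[\exp(\nu|\lambda|^2 E_s)\right]$.

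The task now reduces to identifying this moment generating function of $E_s$ with $E_\beta(\nu|\lambda|^2 s^\beta)$. Here I would use the self-similarity $E_s \stackrel{d}{=} s^\beta E_1$, inherited from $D_r \stackrel{d}{=} r^{1/\beta} D_1$, together with the classical moment identity $\E[E_1^k] = k!/\Gamma(\beta k + 1)$. The latter follows from \eqref{Moment for g} by applying the representation $w^{-\beta k} = \Gamma(\beta k)^{-1}\int_0^\infty u^{\beta k -1} e^{-uw}\,du$, swapping integrals, and invoking the Laplace transform $\E[e^{-u D_1}] = e^{-u^\beta}$ followed by a change of variable. Expanding the exponential and interchanging sum and expectation by monotone convergence then gives
\begin{equation*}
\E[e^{\nu|\lambda|^2 E_s}] = \sum_{k=0}^\infty \frac{(\nu|\lambda|^2)^k \E[E_s^k]}{k!} = \sum_{k=0}^\infty \frac{(\nu|\lambda|^2 s^\beta)^k}{\Gamma(\beta k + 1)} = E_\beta(\nu|\lambda|^2 s^\beta),
\end{equation*}
which is precisely the claim.

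There is no real obstacle: all interchanges (Fubini for the spatial integral, monotone convergence for the series) are justified by non-negativity, and the Mittag-Leffler series is entire in its argument for $\beta\in(0,1)$, so convergence is automatic. The only mildly delicate point is verifying the moment formula for $E_1$, but this is standard. Note that the argument relies crucially on $\alpha=2$: for general $\alpha<2$ the Gaussian MGF step fails because $p_{X(u)}$ has only polynomial tails.
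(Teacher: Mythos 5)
Your proof is correct and follows the standard route: the paper itself states this lemma without proof (citing Lemma 2.2 of \cite{JebesaAndNane1}), and the argument there is essentially yours --- condition on the inverse subordinator via \eqref{Eq:Green1}, evaluate the Gaussian moment generating function $e^{u\nu|\lambda|^2}$, and identify $\E[e^{\nu|\lambda|^2 E_s}]$ with $E_\beta(\nu|\lambda|^2 s^\beta)$ through the moments $\E[E_s^k]=s^{\beta k}\Gamma(1+k)/\Gamma(1+\beta k)$. All your interchanges are justified by non-negativity, and your derivation of the moment formula from $\E[e^{-uD_1}]=e^{-u^\beta}$ checks out.
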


 We barrow the following definition from \cite{foondun-khoshnevisan-09}: let $\Phi$ be a random field, and for every $\gamma>0$ and $k\in [2, \infty)$ define
\begin{equation}
\mathcal{N}_{\gamma,k}(\Phi) := \sup_{t\geq 0}\sup_{x\in\R^d}\left(e^{-\gamma t}||\Phi_t(x)||_k\right):= \sup_{t\geq 0}\sup_{x\in\R^d}\left(e^{-\gamma t}\bigg[\E|\Phi_t(x)|^k\bigg]^{1/k}\right).
\end{equation}
If we identify a.s.-equal random fields, then every $\mathcal{N}_{\gamma, k} $ becomes a norm. Moreover, $\mathcal{N}_{\gamma, k} $ and $\mathcal{N}_{\gamma', k} $ are equivalent norms for all $\gamma, \gamma'>0$ and $k\in [2,\infty).$ Finally, we note that if $\mathcal{N}_{\gamma, k}(\Phi) <\infty$ for some $\gamma >0$ and $k\in[2,\infty)$, then $\mathcal{N}_{\gamma, 2}{(\Phi)}<\infty$  as well, thanks to Jensen's inequality.

\begin{defin}
We denote by $\mathcal{L}^{\gamma, 2}$  the completion of the space of all simple  random fields in the norm $\mathcal{N}_{\gamma, 2}.$
\end{defin}

{
We next recall  the {\bf Walsh-Dalang Integral } briefly:
We use the Brownian filtration $\{\mathcal{F}_t\}$ and the Walsh-Dalang integrals as follows
\begin{itemize}
\item $(t,x)\to \Phi_t(x)$ is an elementary random field when $\exists 0\leq a<b$ and an $\mathcal{F}_a$-measurable $X\in L^2(\Omega)$ and $\phi\in L^2(\rd)$ such that
    $$
    \Phi_t(x)=X1_{[a,b]}(t)\phi(x)\ \ \ (t>0, x\in \rd).
    $$
 \item  If $h=h_t(x)$ is non-random and $\Phi$ is elementary, then
 $$
 \int h\Phi d\xi:=X\int_{(a,b)\times\rd}h_t(x)\phi(x)\xi(dtdx).
 $$
 \item The stochastic integral is Wiener's; well defined iff $h_t(x)\phi(x)\in L^2([a,b]\times\rd)$.
 \item We have Walsh isometry,
 $$
 \E \bigg(\bigg|\int h\Phi d\xi\bigg|^2\bigg)=\int_0^\infty\int_{\rd}dy[h_s(y)]^2\E(|\Phi_s(y)|^2).
 $$

\end{itemize}

}

Given a random field $\Phi := \{\Phi_t(x)\}_{t\geq 0, x\in\R^d}$ and space-time noise $\dot{W}$, we define the [space-time] \textit{stochastic convolution} $G\circledast \Phi$ to be the random field that is defined as
$$(G\circledast \Phi)_t(x) := \int_{(0,t)\times \R^d} G_{t-s}(y-x)\Phi_s(y)W(dsdy),$$
for $t>0$ and $x\in\R^d,$ and $(G\circledast \Phi)_0(x) := 0.$

Define
\begin{equation}
G_s^{(t,x)}(y) := G_{t-s}(y-x)\cdot {\bf{1}}_{(0,t)}(s)\ \ \ \mbox{for all}\ s\geq 0\ \mbox{and}\ y\in\R^d.
\end{equation}
Clearly, $G^{(t,x)}\in L^2(\R_+\times \R^d)$ for $\beta d/\a<1$; in fact,
$$\int_0^\infty ds \int_{\R^d} [G_s^{(t,x)}(y)]^2dy =  \int_0^t ds \int_{\R^d} [G_s(y)]^2dy = C^\ast t^{1-\beta d/\a}<\infty.$$
This computation follows from Lemma \ref{Lem:Green1}. Thus, we may interpret the random variable $(G\circledast \Phi)_t(x)$ as the stochastic integral $\int G_s^{(t,x)}\Phi dW$, provided that $\Phi$ is in $\mathcal{L}^{\gamma, 2}$ for some $\gamma>0.$

\subsection{Some Useful Lemmas}
We start this subsection with a very important and non trivial result. The next Lemma provide an $"a-priori"$ estimate which allows us to overcome some difficulties in the proof of the main result.
\begin{lem}\label{mylemma1}
For $\beta \in (0, \ 1)$,   $k\in\N\cup\{0\}$ and $ d\in\{1, 2,3\}$  define
\[
a_{k}^{d}(\beta):=\mathbb{E}(D_{1}^{-\beta(k-\frac{d}{4})})=\int_{0}^{\infty}w^{-\beta(k-\frac{d}{4})}g_{\beta}(w)dw.
\]
 Then
\begin{equation}\label{mylemmaEq1}
0<\  a_{k}^{d}(\beta)\ \leq 3\frac{\Gamma(1+k)}{\Gamma(1+\beta k)}, \quad \text{for}\quad k\geq1.
\end{equation}
\end{lem}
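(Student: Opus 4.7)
The plan is to compare $a_k^d(\beta)$ with the moment $\mathbb{E}(D_1^{-\beta k})$ from \eqref{Moment for g}, using a simple split of the defining integral at $w=1$. Factor
\[
w^{-\beta(k-d/4)} = w^{\beta d/4}\,w^{-\beta k},
\]
and note that, because $k\ge 1 > d/4$ for $d\in\{1,2,3\}$, the exponent $\beta(k-d/4)$ is strictly positive. On $(0,1]$ one has $w^{\beta d/4}\le 1$, hence $w^{-\beta(k-d/4)}\le w^{-\beta k}$; on $[1,\infty)$ one has $w^{-\beta(k-d/4)}\le 1$ directly. Combining these two crude estimates yields the elementary \emph{a priori} bound
\[
a_k^d(\beta) \le \int_0^1 w^{-\beta k}g_\beta(w)\,dw + \int_1^\infty g_\beta(w)\,dw \le \mathbb{E}(D_1^{-\beta k}) + 1.
\]

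To convert this into the target bound, I would invoke the classical Mittag-Leffler moment identity $\mathbb{E}(D_1^{-\beta k}) = \Gamma(1+k)/\Gamma(1+\beta k)$, derivable in a couple of lines by writing $w^{-\beta k} = \Gamma(\beta k)^{-1}\int_0^\infty \lambda^{\beta k-1}e^{-\lambda w}\,d\lambda$, applying Fubini with $\mathbb{E}(e^{-\lambda D_1}) = e^{-\lambda^\beta}$, and substituting $u=\lambda^\beta$. The desired inequality then reduces to showing $\mathbb{E}(D_1^{-\beta k})\ge 1$, equivalently $\Gamma(1+\beta k)\le \Gamma(1+k)$. Since $\Gamma$ is convex on $(0,\infty)$, its maximum over any compact interval is attained at an endpoint; for integer $k\ge 1$ the endpoints of $[1,\,1+k]$ give values $\Gamma(1)=1$ and $\Gamma(1+k)=k!\ge 1$, so the maximum is $\Gamma(1+k)$. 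Since $1+\beta k\in(1,\,1+k)$ for $\beta\in(0,1)$, we conclude $\Gamma(1+\beta k)\le\Gamma(1+k)$, and therefore
\[
a_k^d(\beta) \le \mathbb{E}(D_1^{-\beta k}) + 1 \le 2\,\mathbb{E}(D_1^{-\beta k}) \le 3\,\frac{\Gamma(1+k)}{\Gamma(1+\beta k)}.
\]
Positivity of $a_k^d(\beta)$ is immediate from positivity of $g_\beta$ on $(0,\infty)$.

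The main obstacle is really only bookkeeping: separating the near-zero and large-$w$ contributions of $w^{-\beta(k-d/4)}$ in a way that cleanly reduces to the already-understood moment $\mathbb{E}(D_1^{-\beta k})$. The closed form $\Gamma(1+k)/\Gamma(1+\beta k)$ for that moment is classical (and already essentially present in \eqref{Moment for g} via $\mathbb{E}(E_1^k)$), and the convexity argument for $\Gamma(1+\beta k)\le\Gamma(1+k)$ is a one-liner. The extra factor of $3$ in the statement (rather than the sharper $2$ this argument produces) simply leaves some slack for subsequent use.
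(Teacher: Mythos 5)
Your proof is correct, and on the tail integral it takes a genuinely different (and more elementary) route than the paper. Both arguments split at $w=1$ and use the same estimate $w^{-\beta(k-d/4)}\le w^{-\beta k}$ on $(0,1)$ together with the moment identity $\E(D_1^{-\beta k})=\Gamma(1+k)/\Gamma(1+\beta k)$. On $[1,\infty)$, however, the paper bounds $w^{-\beta(k-d/4)}\le w^{-\beta(k-1)}$, producing an extra term $\Gamma(k)/\Gamma(1+\beta(k-1))$ that it then controls via Kershaw's inequality for Gamma-function ratios, arriving at $2\,\Gamma(1+k)/\Gamma(1+\beta k)$ for that piece and hence the constant $3$. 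You instead bound the tail integrand by $1$, so the tail contributes at most $\int_1^\infty g_\beta(w)\,dw\le 1$, and then absorb the $+1$ by noting $\Gamma(1+\beta k)\le\Gamma(1+k)$ (your convexity-of-$\Gamma$ argument is valid, since the maximum of a convex function on $[1,1+k]$ is attained at an endpoint), giving the sharper constant $2$. Your route avoids Kershaw's inequality entirely and yields a better constant; the paper's route is slightly more delicate but would still work if the exponent on the tail could not be bounded below by a constant. Since the lemma only asserts the bound with $3$, your argument proves strictly more, and all steps (the exponent sign check $k-d/4>0$ for $d\le 3$, the Fubini derivation of the moment identity from $\E(e^{-\lambda D_1})=e^{-\lambda^\beta}$, and positivity from $g_\beta>0$ on $(0,\infty)$) are sound.
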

\begin{proof}
 First observe that
\begin{eqnarray*}
a_{k}^{d}(\beta)   & = & \int_{0}^{\infty} \frac{ g_{\beta}(w) }{ w^{ \beta(k-\frac{d}{4}) }}dw\nonumber\\
&=&\int_{0}^{1}\frac{ g_{\beta}(w)}{ w^{\beta(k-\frac{d}{4})}}dw+\int_{1}^{\infty}\frac{g_{\beta}(w)}{w^{\beta(k-\frac{d}{4})}}dw \nonumber\\
\end{eqnarray*}
Since for every $k\geq 1$ we  have
$$
\frac{1}{w^{\beta(k-\frac{d}{4})}}\leq \begin{cases}
                                        \frac{1}{w^{\beta k}} \quad \text{if}\ 0<w<1\\
                                        \frac{1}{w^{\beta( k-1)}} \quad \text{if}\ w> 1.
                                       \end{cases}
$$
 Using the uniqueness of Laplace Transform and Remark 3.1 in \cite{limitCTRW}, we can easily show  that $\E(D_{1}^{-\beta k}))=\E(E_t^k)=\frac{\Gamma(1+k)}{\Gamma(1+\beta k)}.$ Then we have that
\begin{eqnarray}\label{Eq10}
a_{k}^{d}(\beta)& \leq & \int_{0}^{\infty}\frac{g_{\beta}(w)}{w^{\beta k}}dw  +\int_{1}^{\infty}\frac{g_{\beta}(w)}{w^{\beta(k-1)}}dw\nonumber\\
                & \leq & \int_{0}^{\infty}\frac{g_{\beta}(w)}{w^{\beta k}}dw  +\int_{0}^{\infty}\frac{g_{\beta}(w)}{w^{\beta(k-1)}}dw\nonumber\\
                 &= & \frac{\Gamma(1+k)}{\Gamma(1+\beta k)} + \frac{\Gamma(k)}{\Gamma(1+\beta(k-1))}\nonumber\\
\end{eqnarray}
But
$$
\frac{1}{\Gamma(1+\beta( k-1))} = \frac{1+\beta(k-1)}{\Gamma(1+\beta(k-1)+ 1)}
$$
Using D. kershaw inequality $ 1/\Gamma(x+1)<1/(x+1/2)^{1-\lambda}\Gamma(x+\lambda)$ for $x=\beta(k-1)+ 1$ and $\lambda=\beta,$ we obtain that
$$
\frac{1}{\Gamma(1 + (\beta(k-1)+ 1))}\leq \frac{1}{(\frac{3}{2}+\beta(k-1))^{1-\beta}\Gamma(1+\beta k)}.
$$
Since $ \frac{1}{(\frac{3}{2}+\beta(k-1))^{1-\beta}}\leq 1 $ and  $1+\beta(k-1)\leq 2k ,$ it follows that
\begin{eqnarray*}
\frac{1}{\Gamma(1+\beta( k-1))}& \leq & \frac{2k}{\Gamma(1+\beta k)}.\nonumber\\
\end{eqnarray*}
Multiplying both sides of the last expression by $\Gamma(k),$ we get
\begin{eqnarray*}
 \frac{\Gamma(k)}{\Gamma(1+\beta(k-1))}& \leq & 2\frac{k\Gamma(k)}{\Gamma(1+\beta k)}\nonumber\\
                                       & = & 2\frac{\Gamma(1+k)}{\Gamma(1+\beta k)}.\nonumber
 \end{eqnarray*}
 Adding $\frac{\Gamma(1+k)}{\Gamma(1+\beta k)}$ to both side of the last expression, and combine with inequality(\ref{Eq10}) give the proof of inequality (\ref{mylemmaEq1}).\\
\end{proof}
\begin{lem}\label{mylemma1'}
For $\beta \in (0, \ 1)$,   $k\in\N\cup\{0\}$ and $ d\in\{1, 2,3\}$   define
\[
a_{k}^{d}(\beta)=\int_{0}^{\infty}w^{-\beta(k-\frac{d}{4})}g_{\beta}(w)dw=\mathbb{E}(D_{1}^{-\beta(k-\frac{d}{4})}).
\]
 Then
\begin{equation}\label{mylemma1Eq2}
\frac{ a^{d}_{k}(\beta)\sqrt{ \Gamma(1+\beta(2k-\frac{d}{2}))} }{ k! }\ \leq \ \frac{ 3 \sqrt{2\beta(k-\frac{d}{4})}\sqrt{\Gamma(2\beta(1-\frac{d}{4})) } }{\Gamma(1+\beta)}2^{\beta(k-1)}\ \quad \text{for} \quad k\geq 1.
\end{equation}
\end{lem}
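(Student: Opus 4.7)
The plan is to combine Lemma~\ref{mylemma1} with the functional equation for $\Gamma$ to reduce \eqref{mylemma1Eq2} to a cleaner inequality, and then prove that inequality by induction on $k$.

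\textbf{Reduction.} By Lemma~\ref{mylemma1}, $a_k^d(\beta)\le 3k!/\Gamma(1+\beta k)$ for $k\ge 1$, so
\[
\frac{a_k^d(\beta)\sqrt{\Gamma(1+\beta(2k-d/2))}}{k!}\;\le\;\frac{3\,\sqrt{\Gamma(1+2\beta(k-d/4))}}{\Gamma(1+\beta k)}.
\]
Applying $\Gamma(1+x)=x\,\Gamma(x)$ with $x=2\beta(k-d/4)$ extracts the factor $\sqrt{2\beta(k-d/4)}$ that already appears on the right-hand side of \eqref{mylemma1Eq2}. Hence it suffices to show, for every $k\ge 1$,
\[
\frac{\sqrt{\Gamma(2\beta(k-d/4))}}{\Gamma(1+\beta k)}\;\le\;\frac{\sqrt{\Gamma(2\beta(1-d/4))}}{\Gamma(1+\beta)}\,2^{\beta(k-1)}.
\]

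\textbf{Induction on $k$.} The base case $k=1$ is an equality. For the inductive step, after dividing the $(k+1)$- and $k$-versions of the reduced inequality, it suffices to establish
\[
\sqrt{\frac{\Gamma(2\beta(k-d/4)+2\beta)}{\Gamma(2\beta(k-d/4))}}\;\le\;2^{\beta}\cdot\frac{\Gamma(1+\beta(k+1))}{\Gamma(1+\beta k)}.
\]
I would bound the left side using Wendel's upper inequality applied twice (since the shift $2\beta$ may exceed $1$), namely $\Gamma(x+2\beta)/\Gamma(x)\le x^\beta(x+\beta)^\beta$ with $x=2\beta(k-d/4)$; and bound the right side from below by Wendel's lower inequality $\Gamma(y+\beta)/\Gamma(y)\ge y^\beta\bigl(y/(y+\beta)\bigr)^{1-\beta}$ with $y=1+\beta k$. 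Since $2\beta(k-d/4)/(1+\beta k)\to 2$ as $k\to\infty$, the combined estimate has the correct asymptotic form $2^\beta$, and one then checks the lower-order correction factors remain compatible for every $k\ge1$.

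\textbf{Main obstacle.} The delicate step is verifying the induction uniformly in $k$: although the Wendel-type estimates reproduce the $2^\beta$ factor in the limit, the lower-order corrections must be controlled carefully at small $k$. Should they prove too loose, a backup route is the Legendre duplication formula
\[
\Gamma(2z)=\frac{2^{2z-1}}{\sqrt\pi}\,\Gamma(z)\,\Gamma(z+1/2)
\]
applied with $z=\beta(k-d/4)$: this rewrites $\Gamma(2\beta(k-d/4))$ as a product of two gamma factors of shift $\beta$ that are directly comparable to $\Gamma(1+\beta k)$ through the Kershaw inequality already used in the proof of Lemma~\ref{mylemma1}, producing the $2^{\beta(k-1)}$ factor cleanly from the duplication prefactor $2^{2\beta(k-d/4)-1}$.
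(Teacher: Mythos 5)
Your reduction is correct and it targets exactly the right inequality: using Lemma~\ref{mylemma1} and $\Gamma(1+x)=x\Gamma(x)$ with $x=2\beta(k-\tfrac d4)$, the claim does reduce to
\[
\frac{\sqrt{\Gamma(2\beta(k-\frac d4))}}{\Gamma(1+\beta k)}\;\le\;\frac{\sqrt{\Gamma(2\beta(1-\frac d4))}}{\Gamma(1+\beta)}\,2^{\beta(k-1)},
\]
which is precisely (the square root of) what the paper's argument establishes. The problem is that your main route never actually proves the inductive step. After inserting Wendel's two-sided bounds, the step you need becomes
\[
\frac{x(x+\beta)}{4(1+\beta k)^2}\;\le\;\Bigl(\frac{1+\beta k}{1+\beta k+\beta}\Bigr)^{2(1-\beta)/\beta},\qquad x=2\beta\bigl(k-\tfrac d4\bigr),
\]
and this is not a "lower-order correction" check: both sides tend to $1$ as $k\to\infty$ with first-order terms that cancel except for a margin of size $O(1/k)$ (for instance, with $\beta=0.01$, $d=1$, $k=10^4$ the two sides are $0.98030$ and $0.98059$). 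Verifying it uniformly in $k\ge1$ and $\beta\in(0,1)$ is the entire content of the lemma at this point, and you have only checked the asymptotics. As written, the proof is incomplete.

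Your backup route is, in substance, the paper's proof: apply the duplication formula with $z=\beta(k-\tfrac d4)$, so that the prefactor $2^{2z-1}$ (divided by its $k=1$ counterpart) produces $2^{2\beta(k-1)}$, and then compare the remaining gamma ratios to their $k=1$ values. But the closing tool you name is not the right one. Kershaw's inequality (used in Lemma~\ref{mylemma1} to bound $1/\Gamma(1+\beta(k-1))$) does not directly give the needed monotonicity; what the paper actually uses is the Beta-function representation
\[
\frac{\Gamma(\beta(k-\frac d4)+c)\,\Gamma(1-c+\frac{\beta d}{4})}{\Gamma(1+\beta k)}=\int_0^1 t^{\beta k}\,t^{-c-\frac{\beta d}{4}}\,(1-t)^{\,\cdots}\,dt \quad (c\in\{0,\tfrac12\}),
\]
together with $t^{\beta k}\le t^{\beta}$ for $t\in(0,1)$ and $k\ge1$, to conclude that $\Gamma(\beta(k-\frac d4)+c)/\Gamma(1+\beta k)$ is maximized at $k=1$. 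If you replace "Kershaw" by this Beta-integral comparison, your backup route closes the argument and coincides with the paper's proof; the Wendel/induction route would need a genuine uniform-in-$k$ verification before it could stand on its own.
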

\begin{proof} Using the Duplication formula $ \Gamma(2x)=2^{2x-1}\Gamma(x)\Gamma(x+\frac{1}{2})/\sqrt{\pi}$, we obtain that
\begin{equation}\label{Eq11}
\Gamma\left(1+\beta\left(2k-\frac{d}{2}\right)\right)=\beta(2k-\frac{d}{2})\left[\frac{2^{2\beta(k-\frac{d}{4})-1}\Gamma\left(\beta(k-\frac{d}{4})\right)\Gamma(\beta(k-\frac{d}{4})+\frac{1}{2})}{\sqrt{\pi}}\right]
\end{equation}
This combining with (\ref{mylemmaEq1}) yields that
\begin{equation}\label{Eq15}
\frac{a_{k}^{d}(\beta)\sqrt{\Gamma(1+\beta(2k- \frac{d}{2}))}}{k!}\leq 3\sqrt{\beta(2k-\frac{d}{2})}\left[\frac{2^{\beta(k-\frac{d}{4})-\frac{1}{2}}\sqrt{\Gamma(\beta(k-\frac{d}{4}))}\sqrt{\Gamma(\beta(k-\frac{d}{4})+\frac{1}{2})}}{(\pi)^{\frac{1}{4}}\Gamma(1+\beta k)}\right]
\end{equation}
 Using the relationship between the Beta and Gamma functions $\Gamma(x)\Gamma(y)=B(x,y)\Gamma(x,y)$ with $x=\beta k +\frac{1}{2}-\frac{\beta d}{4}$ and $y=\frac{1}{2}+\frac{\beta d}{4},$ we obtain that
 \begin{eqnarray*}
 \frac{\Gamma(\beta(k-\frac{d}{4})+\frac{1}{2}) \Gamma(\frac{1}{2}+\frac{\beta d}{4})}{\Gamma(1+\beta k)}
 & =& B\left( \beta(k-\frac{d}{4})+\frac{1}{2} ,\frac{1}{2}+\frac{\beta d}{4}\right)  \nonumber\\
 &= & \int_{0}^{1}t^{\beta k +\frac{1}{2}-\frac{\beta d}{4}-1}(1-t)^{\frac{1}{2}+\frac{\beta d}{4}}dt   \nonumber\\
 &= & \int_{0}^{1}t^{\beta k}t^{\frac{1}{2}-\frac{\beta d}{4}-1}(1-t)^{\frac{1}{2}+\frac{\beta d}{4}}dt   \nonumber\\
 &\leq & \int_{0}^{1}t^{\beta}t^{\frac{1}{2}-\frac{\beta d}{4}-1}(1-t)^{\frac{1}{2}+\frac{\beta d}{4}}dt   \nonumber\\
 &= & \int_{0}^{1}t^{\beta+\frac{1}{2}-\frac{\beta d}{4}-1}(1-t)^{\frac{1}{2}+\frac{\beta d}{4}}dt   \nonumber\\
 &=& B\left(\beta+\frac{1}{2}-\frac{\beta d}{4},\frac{1}{2}+\frac{\beta d}{4}\right) \nonumber\\
 &=& \frac{\Gamma(\beta+\frac{1}{2}-\frac{\beta d}{4})\Gamma(\frac{1}{2}+\frac{\beta d}{4})}{\Gamma(1+\beta)}\nonumber\\
 \end{eqnarray*}
 It follows that
 \begin{equation}\label{Eq12}
 \frac{\Gamma(\beta(k-\frac{d}{4})+\frac{1}{2})}{\Gamma(1+\beta k)}\leq \frac{\Gamma(\beta(1-\frac{d}{4})+\frac{1}{2})}{\Gamma(1+\beta)}
 \end{equation}
 On the other hand, by repeating again the previous arguments with $x=\beta(k -\frac{d}{4})$ and $y=1+\frac{\beta d}{4}$ we obtain that
 \begin{equation}\label{Eq13}
  \frac{\Gamma(\beta(k-\frac{d}{4}))}{\Gamma(1+\beta k)}\leq \frac{\Gamma(\beta(1-\frac{d}{4}))}{\Gamma(1+\beta)}.
 \end{equation}
Inequalities (\ref{Eq12}) and  (\ref{Eq13}) combined give
\begin{equation}\label{Eq15}
\frac{\Gamma(\beta(k-\frac{d}{4}))\Gamma(\beta(k-\frac{d}{4})+\frac{1}{2})}{\left[\Gamma(1+\beta k)\right]^2}\leq \frac{\Gamma(\beta(1-\frac{d}{4})+\frac{1}{2})\Gamma(\beta(1-\frac{d}{4}))}{\left[\Gamma(1+\beta)\right]^2}.
\end{equation}
  Duplication formula with $x=\beta(1-\frac{d}{4})$ give
  \begin{equation}\label{Eq16}
  \Gamma(\beta(1-\frac{d}{4})+\frac{1}{2})\Gamma(\beta(1-\frac{d}{4}))=2^{1-2\beta(1-\frac{d}{4})}\sqrt{\pi}\Gamma(2\beta(1-\frac{d}{4})).
  \end{equation}
 Combining (\ref{Eq15}),(\ref{Eq16}) and  we obtain that
 $$
 \frac{\Gamma(\beta(k-\frac{d}{4}))\Gamma(\beta(k-\frac{d}{4})+\frac{1}{2})}{\left[\Gamma(1+\beta k)\right]^2}\leq
 \frac{2^{1-2\beta(1-\frac{d}{4})}\sqrt{\pi}\Gamma(2\beta(1-\frac{d}{4}))}{\left[\Gamma(1+\beta)\right]^2}.
 $$
 Taking square root of both side of the last expression, we get
\begin{equation}\label{Eq14}
\frac{2^{\beta(1-\frac{d}{4})-\frac{1}{2}}\sqrt{\Gamma(\beta(k-\frac{d}{4}))}\sqrt{\Gamma(\beta(k-\frac{d}{4})+\frac{1}{2})}}{(\pi)^{\frac{1}{4}}\Gamma(1+\beta k)}\leq \frac{\sqrt{\Gamma(2\beta(1-\frac{d}{4}))}}{\Gamma(1+\beta)}.
\end{equation}
Inequalities (\ref{Eq15}) and (\ref{Eq14}) complete the proof of (\ref{mylemma1Eq2}).
\end{proof}

The next lemma will also be needed in the proof of our main theorem in the next section.

\begin{lem}\label{mylemma2}
For every $\beta\in(0, \ 1)$ and $n, k\in\N\cup\{0\},$ and $d\in\{1,2,3\}$ satisfying the assumption of Proposition \ref{propostionforInt}, define
\[
b_{k,n}(\beta)=\int_{0}^{t}e^{-\gamma s}s^{\beta(k+n -\frac{d}{2})}ds.
\]
\end{lem}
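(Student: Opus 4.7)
The statement as given simply defines $b_{k,n}(\beta)$, so I will sketch a proof of the natural bound one expects for this quantity, namely
\[
b_{k,n}(\beta)\ \le\ \frac{\Gamma\!\bigl(1+\beta(k+n-\tfrac{d}{2})\bigr)}{\gamma^{\,1+\beta(k+n-\tfrac{d}{2})}},
\]
which is the estimate that makes $b_{k,n}$ compatible, dimensionally and in growth, with the Mittag--Leffler type series manipulations for which Lemmas \ref{mylemma1} and \ref{mylemma1'} were designed. (If the authors in fact state an exact identity rather than the bound, the plan below still produces it, since the bound arises from discarding one term in an explicit identity.)

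The first step is to rescale. Setting $u=\gamma s$ in the definition of $b_{k,n}(\beta)$ transforms it into
\[
b_{k,n}(\beta)\ =\ \frac{1}{\gamma^{\,1+\beta(k+n-d/2)}}\int_{0}^{\gamma t} e^{-u}\,u^{\beta(k+n-d/2)}\,du,
\]
which is precisely a lower incomplete gamma function: $\gamma^{-(1+\beta(k+n-d/2))}\,\gamma_{\rm inc}\!\bigl(1+\beta(k+n-d/2),\gamma t\bigr)$. Extending the upper limit from $\gamma t$ to $\infty$ produces the upper bound above; this extension is legitimate, and the bound is sharp as $t\to\infty$, which is the regime in which $b_{k,n}$ will be used in the main result.

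The only nontrivial verification is that the exponent satisfies $\beta(k+n-\tfrac{d}{2})>-1$, for otherwise the integral (and the limiting Gamma function) is divergent at $0$. For $k+n\ge 1$, this is immediate since $d\le 3$ gives $\beta(k+n-\tfrac{d}{2})\ge -\tfrac{3\beta}{2}>-\tfrac32>-1$. The edge case $k=n=0$ requires the condition $\beta d/2<1$, which is precisely the hypothesis $d<\min\{2,\beta^{-1}\}\alpha$ with $\alpha=2$ already in force throughout the paper (and presumably inherited from Proposition \ref{propostionforInt}). Since the bound is dominated-convergence-friendly in $t$, one finally obtains $b_{k,n}(\beta)\le \Gamma(1+\beta(k+n-d/2))/\gamma^{1+\beta(k+n-d/2)}$ uniformly in $t$.

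I do not foresee a genuine obstacle here: the lemma is a calculational one whose role is to prepare the ingredients for the fixed-point/moment estimate in the proof of Theorem \ref{maintheoremtwo}. The only point requiring care is matching the Gamma factor produced here with those appearing in Lemma \ref{mylemma1'}, so that the subsequent Picard iteration series can be summed; this matching is why the exponent is chosen as $\beta(k+n-d/2)$ rather than, say, $\beta(k+n)-d/2$, and it is dictated by the scaling $\int G_t^2\,dx\sim t^{-\beta d/\alpha}$ from Lemma \ref{Lem:Green1} together with the Walsh isometry.
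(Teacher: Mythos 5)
The lemma as printed is misleadingly truncated --- the actual claims, displayed just after the \verb|\end{lem}|, are two inequalities: \eqref{mylemma2Eq1}, the Cauchy--Schwarz factorization $b_{k,n}(\beta)\le [b_{k,k}(\beta)]^{1/2}[b_{n,n}(\beta)]^{1/2}$, and \eqref{mylemma2Eq2}, the diagonal bound $b_{k,k}(\beta)\le \gamma^{-(1+2\beta(k-d/4))}\Gamma(1+\beta(2k-\tfrac d2))$. Your argument --- substitute $u=\gamma s$, extend the upper limit to $\infty$, and recognize a Gamma integral --- is exactly the paper's proof of \eqref{mylemma2Eq2}, just carried out for general $(k,n)$ rather than on the diagonal (note $1+2\beta(k-\tfrac d4)=1+\beta(2k-\tfrac d2)$, so your bound with $n=k$ is literally the paper's). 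That part is fine, as is your observation that integrability at $0$ in the case $k=n=0$ rests on $\beta d/2<1$, which follows from $d<\min\{2,\beta^{-1}\}\alpha$ with $\alpha=2$.

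The genuine gap is that you have skipped \eqref{mylemma2Eq1} entirely, and that inequality is the one the lemma exists for. In the proof of Proposition \ref{propostionforInt} the quantity $\sum_{k,n}\frac{a^d_k a^d_n}{k!\,n!}(\tfrac{\nu\|c\|^2}{4})^{k+n}b_{k,n}(\beta)$ must collapse into the perfect square $\bigl[\sum_k \frac{a^d_k}{k!}(\tfrac{\nu\|c\|^2}{4})^k [b_{k,k}(\beta)]^{1/2}\bigr]^2$, and it is precisely the splitting $b_{k,n}\le b_{k,k}^{1/2}b_{n,n}^{1/2}$ (obtained by writing $e^{-\gamma s}s^{\beta(k+n-d/2)}=(e^{-\gamma s/2}s^{\beta(k-d/4)})(e^{-\gamma s/2}s^{\beta(n-d/4)})$ and applying Cauchy--Schwarz on $[0,t]$) that makes this possible. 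Your substitute bound $b_{k,n}\le \Gamma(1+\beta(k+n-\tfrac d2))/\gamma^{1+\beta(k+n-d/2)}$ is correct but does \emph{not} factor as a product of a $k$-term and an $n$-term, so it cannot be fed into the factorized single series that Lemma \ref{mylemma1'} is then used to control; your claim that it is ``compatible with the series manipulations'' is asserted rather than checked, and as stated it is not. One could presumably salvage a two-index summation with additional Gamma-function inequalities, but that is extra work you have not done. A minor further point: your chain ``$\ge -\tfrac{3\beta}{2}>-\tfrac32>-1$'' is broken, since $-\tfrac32<-1$; for $k+n\ge1$ and $d\le3$ the correct observation is $k+n-\tfrac d2\ge-\tfrac12$, hence the exponent is at least $-\tfrac\beta2>-1$.
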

Then
\begin{equation}\label{mylemma2Eq1}
b_{k,n}(\beta)\  \leq\ \left[b_{k,k}(\beta) \right]^{\frac{1}{2}}\left[b_{n,n}(\beta) \right]^{\frac{1}{2}}
\end{equation}
and
\begin{equation}\label{mylemma2Eq2}
b_{k,k}(\beta)\ \leq \ \left(\frac{1}{\gamma}\right)^{1+2\beta(k-\frac{d}{4})}\Gamma(1+\beta(2k-\frac{d}{2})).
\end{equation}
\begin{proof}\textbf{Proof of inequality (\ref{mylemma2Eq1}) of Lemma\ref{mylemma2}.}
\begin{eqnarray*}
b_{k,n}(\beta)& =&\int_{0}^{t}e^{-\gamma s}s^{\beta(k+n-\frac{d}{2})}ds\\
              & =& \int_{0}^{t}\left(e^{-\frac{\gamma s}{2}}s^{\beta(k-\frac{d}{4})} \right)\left(e^{-\frac{\gamma s}{2}}s^{\beta(n-\frac{d}{4})} \right)ds\\
              & \leq & \left(\int_{0}^{t}e^{-\gamma s}s^{2\beta(k-\frac{d}{4})}ds \right)^{\frac{1}{2}}\left(\int_{0}^{t}e^{-\gamma s}s^{2\beta(n-\frac{d}{4})}ds \right)^{\frac{1}{2}}\\
              &=&\left[b_{k,k}(\beta)\right]^{\frac{1}{2}}\left[b_{n,n}(\beta)\right]^{\frac{1}{2}}.\\
\end{eqnarray*}
\textbf{Proof of inequality (\ref{mylemma2Eq2}) of Lemma\ref{mylemma2}.}
\begin{eqnarray*}
b_{k,k}(\beta)& = &\int_{0}^{t}e^{-\gamma s}s^{2\beta k -\frac{\beta d}{2}}ds\\
              & = &\int_{0}^{\gamma t}e^{-w}\left(\frac{w}{\gamma}\right)^{2\beta k -\frac{\beta d}{2}}\frac{dw}{\gamma}\\
              & = &\left(\frac{1}{\gamma}\right)^{2(\beta k -\frac{d}{4})+1}\int_{0}^{\gamma t}e^{-w}w^{\beta(2k-\frac{d}{2})}dw\\
              & \leq & \left(\frac{1}{\gamma}\right)^{2(\beta k -\frac{d}{4})+1}\Gamma(\beta(2k-\frac{d}{2})+1).\\
\end{eqnarray*}
\end{proof}

\section{Intermittency fronts}\label{intermitteny-fronts}

 Here we state and prove our main result on the intermittency fronts for the solution of equation \eqref{tfspde}. Our results generalize the work of Jebessa B. Mejina and Erkan Nane see theorem4.1 in \cite{JebesaAndNane1}. In \cite{JebesaAndNane1} the authors proved the result for $d=1$ and $\alpha=2.$ With the aid of Lemma\ref{mylemma1}, Lemma\ref{mylemma1'} and Lemma\ref{mylemma2} we are able to overcome the difficulties in their methods and extend the  result for $d\in\{1,2,3\}$ and $\alpha=2.$
  Assume that $\sigma(\cdot)$ in \eqref{tfspde} satisfies the following global Lipschitz condition, i.e. there exists a generic positive constant $\mbox{Lip}_\sigma$ such that:
\begin{equation}\label{Eq:Cond_a}
|\sigma(x)-\sigma(y)|\le \mbox{Lip}_\sigma\|x-y\|\quad\mbox{for all }\,\,x,\,y\in\R^d.
\end{equation}
Clearly, (\ref{Eq:Cond_a}) implies the uniform linear growth condition of $\sigma(\cdot).$ Recall the definition of $\mathscr{L}(\theta)$ from \eqref{intermittecy-front-L}.

We first state a proposition that implies that the solution of equation \eqref{tfspde} is square integrable over time in the language of partial differential equations.
\begin{prop}[Proposition 4.2 in \cite{JebesaAndNane1}]Assume that $\alpha\in (0,2]$,   and $d<\min\{2,\beta^{-1}\}\a$, then $u_t\in L^2(\R)$ a.s. for all $t\geq 0;$ in
fact, for any fixed $\epsilon\in (0,1)$ and $t\geq 0,$
\begin{equation}\label{solutionL2integrability}
\E\left(||u_t||^2_{L^2(\R^{d})}\right)\leq \epsilon^{-1}||u_0||^2_{L^2(\R^{d})}\exp\left(\bigg[\frac{C^\ast\Gamma(1-\beta d/\a)\mbox{Lip}_\sigma^2}{1-\epsilon}\bigg]^{\frac{1}{1-\beta d/\a}} t\right)
\end{equation}
\end{prop}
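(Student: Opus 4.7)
The plan is to start from the mild formulation \eqref{Eq:Mild} and derive a closed Volterra-type inequality for $f(t) := \E\|u_t\|^2_{L^2(\R^d)}$ driven by the weakly singular kernel supplied by Lemma \ref{Lem:Green1}. The growth estimate will then be extracted by fractional Gronwall / Mittag-Leffler asymptotics.

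First I would apply the elementary inequality $(a+b)^2 \leq \epsilon^{-1}a^2 + (1-\epsilon)^{-1}b^2$, valid for every $\epsilon\in(0,1)$, to the mild formulation to obtain
\begin{equation*}
\E\|u_t\|^2_{L^2(\R^d)} \leq \epsilon^{-1}\|G_t * u_0\|^2_{L^2(\R^d)} + (1-\epsilon)^{-1}\E\|N_t\|^2_{L^2(\R^d)},
\end{equation*}
where $N_t(x) := \int_0^t\int_{\R^d}\sigma(u_r(y))G_{t-r}(x-y)W(dr\,dy)$ is the stochastic convolution. Because $G_t$ is the probability density of the subordinated process $X(E_t)$, we have $\|G_t\|_{L^1(\R^d)}=1$; Young's convolution inequality therefore gives $\|G_t*u_0\|_{L^2}\leq \|u_0\|_{L^2}$, and the deterministic term is controlled.

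Next I would apply the Walsh isometry to $N_t(x)$ pointwise in $x$, integrate in $x\in\R^d$ via Fubini, and use Lemma \ref{Lem:Green1} to evaluate the spatial integral $\int_{\R^d}G_{t-r}^2(x-y)\,dx = C^\ast (t-r)^{-\beta d/\alpha}$. Combining this with the Lipschitz condition (with the convention $\sigma(0)=0$, so that $|\sigma(z)|^2\leq \mbox{Lip}_\sigma^2|z|^2$) yields the fractional Volterra inequality
\begin{equation*}
f(t)\leq \epsilon^{-1}\|u_0\|^2_{L^2(\R^d)} + \frac{C^\ast \mbox{Lip}_\sigma^2}{1-\epsilon}\int_0^t (t-r)^{-\beta d/\alpha} f(r)\,dr.
\end{equation*}

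The principal obstacle is closing this fractional Gronwall step with the sharp constant that appears in the exponent. I would iterate the inequality; the $n$-fold Picard iterate of the kernel $s\mapsto s^{-\beta d/\alpha}$ admits the closed Beta-function form $\Gamma(1-\beta d/\alpha)^n\,t^{n(1-\beta d/\alpha)}/\Gamma(1+n(1-\beta d/\alpha))$, and summing the resulting series produces the Mittag-Leffler bound
\begin{equation*}
f(t)\leq \epsilon^{-1}\|u_0\|^2_{L^2(\R^d)}\,E_{1-\beta d/\alpha}\!\left(A\,t^{1-\beta d/\alpha}\right), \qquad A := \frac{C^\ast \Gamma(1-\beta d/\alpha)\mbox{Lip}_\sigma^2}{1-\epsilon}.
\end{equation*}
The hypothesis $d<\min\{2,\beta^{-1}\}\alpha$ forces $\beta d/\alpha<1$, ensuring that the kernel is locally integrable and that the Mittag-Leffler series converges. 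The classical asymptotic $E_\gamma(z)\sim \gamma^{-1}\exp(z^{1/\gamma})$ as $z\to\infty$ for $\gamma\in(0,1)$, applied with $\gamma=1-\beta d/\alpha$, converts this into the advertised exponential rate $A^{1/(1-\beta d/\alpha)}$, and $\E\|u_t\|^2_{L^2(\R^d)}<\infty$ immediately gives $u_t\in L^2(\R^d)$ almost surely for every $t\geq 0$.
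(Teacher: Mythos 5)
The paper does not reproduce a proof of this proposition (it is imported verbatim from \cite{JebesaAndNane1}), but your route is the standard argument behind it and, up to the last step, the one the cited reference follows: the $\epsilon$-weighted splitting of the mild form \eqref{Eq:Mild}, Young's inequality with $\|G_t\|_{L^1(\R^d)}=1$ for the deterministic term, the Walsh isometry plus Fubini and Lemma \ref{Lem:Green1} to produce the kernel $C^\ast(t-r)^{-\beta d/\alpha}$, and a fractional Gronwall iteration whose iterated kernels you compute correctly via the Beta function. (It is the time-fractional analogue of Proposition 8.1 in \cite{khoshnevisan-cbms}.) Two steps are not yet closed.

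First, the Gronwall iteration is only valid once you know $f(t)=\E\|u_t\|^2_{L^2(\R^d)}$ is locally bounded, since otherwise the inequality $f\le a+b\,(k\ast f)$ is vacuous and the remainder term $b^{n}(k^{\ast n}\ast f)$ cannot be discarded. The existence theory quoted in Section 1 gives only $\sup_{x}\sup_{t\le T}\E|u_t(x)|^2<\infty$, which says nothing about integrability over all of $\R^d$. The standard repair is to run your entire computation on the Picard iterates $u^{(n)}$, for which $\E\|u^{(n)}_t\|^2_{L^2(\R^d)}<\infty$ follows by induction from $u_0\in L^2(\R^d)$ and the same isometry, obtain the bound uniformly in $n$, and pass to the limit by Fatou. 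This should be said explicitly.

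Second, the displayed inequality \eqref{solutionL2integrability} is not literally delivered by your final step. Writing $\gamma=1-\beta d/\alpha$ and $A$ as in your proposal, the iteration correctly yields $f(t)\le \epsilon^{-1}\|u_0\|^2_{L^2(\R^d)}\,E_\gamma(A t^{\gamma})$; but $E_\gamma(z)$ is \emph{not} dominated by $e^{z^{1/\gamma}}$: the ratio $E_\gamma(z)/e^{z^{1/\gamma}}$ tends to $1/\gamma>1$ as $z\to\infty$, and it also exceeds $1$ for small $z>0$ (compare the first-order terms $z/\Gamma(1+\gamma)$ and $z^{1/\gamma}$ with $1/\gamma>1$). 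So the asymptotic $E_\gamma(z)\sim\gamma^{-1}e^{z^{1/\gamma}}$ gives the advertised exponential \emph{rate} $A^{1/\gamma}$ but with an extra multiplicative factor in front of the exponential, not the bare prefactor $\epsilon^{-1}\|u_0\|^2_{L^2(\R^d)}$. For everything this paper uses the proposition for, only the rate matters, so your bound suffices; but to obtain the inequality exactly as stated you must either absorb the constant (e.g.\ by slightly enlarging the exponent or adjusting $\epsilon$) or close the estimate differently, for instance via the weighted norm $\sup_{t\ge0}e^{-\lambda t}f(t)\le a\left(1-A\lambda^{-\gamma}\right)^{-1}$ for $\lambda>A^{1/\gamma}$ — which again carries a constant. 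Flag this discrepancy rather than asserting the clean constant.
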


The proof of Theorem \ref{maintheoremtwo} requires the following ``weighted stochastic Young's inequality'' which is an extension of Proposition  8.3 in \cite{khoshnevisan-cbms}.

\begin{prop}\label{propostionforInt}
Let $\alpha=2 $ and $ d< \min\{2, \beta^{-1}\}\alpha$.
Define for all $\gamma > 0, c\in\R^{d},$ and $\Phi\in\mathcal{L}^{\beta,2},$
\begin{equation*}
\mathcal{N}_{\gamma,c}(\Phi):= \sup_{t\geq 0}\sup_{x\in{\R^{d}}}\bigg[e^{-\gamma t + c.x}\E\left(|\Phi_t(x)|^2\right)\bigg]^{1/2}.
\end{equation*}
Then,
\begin{equation*}
\mathcal{N}_{\gamma,c}(G\circledast \Phi)\leq C_{d}(c,\gamma,\beta)\mathcal{N}_{\gamma,c}(\Phi)\ \ \ \mbox{for all}\ \left(\frac{\gamma}{2}\right)^\beta >\frac{\nu \|c\|^2}{2},
\end{equation*}
where $C_{d}(c,\gamma,\beta)$ is a finite constant that depends on $d, \|c\|,\gamma,$ and $\beta$.
\end{prop}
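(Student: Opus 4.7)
The plan is to deduce the proposition from a Laplace-type estimate on a weighted $L^2$-norm of the Green function. Starting from the Walsh isometry for the stochastic convolution,
\[
\E[|(G\circledast\Phi)_t(x)|^2]=\int_0^t\!\!\int_{\R^d}G_{t-s}(y-x)^2\,\E[|\Phi_s(y)|^2]\,dy\,ds,
\]
I would apply the pointwise bound $\E[|\Phi_s(y)|^2]\le e^{\gamma s-c\cdot y}\mathcal{N}_{\gamma,c}(\Phi)^2$, multiply through by $e^{-\gamma t+c\cdot x}$, and substitute $r=t-s$, $z=y-x$. This reduces the proof to the uniform (in $t$) finiteness of
\[
J(\gamma,c):=\int_0^\infty e^{-\gamma r}\,I(r)\,dr,\qquad I(r):=\int_{\R^d}G_r(z)^2\,e^{-c\cdot z}\,dz,
\]
after which $C_d(c,\gamma,\beta):=\sqrt{J(\gamma,c)}$ satisfies the stated bound.

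To control $I(r)$ I would use the subordination formula $G_r(z)=\int_0^\infty p_{X(s)}(z)\,f_{E_r}(s)\,ds$; since $\alpha=2$, each $p_{X(s)}$ is Gaussian with covariance $(2\nu s)\mathrm{Id}$. Writing $G_r(z)^2$ as a double integral in $(s_1,s_2)$ and completing the square in $z$ yields the exact identity
\[
I(r)=\frac{1}{(4\pi\nu)^{d/2}}\int_0^\infty\!\!\int_0^\infty\frac{\exp\{\nu\|c\|^2 s_1 s_2/(s_1+s_2)\}}{(s_1+s_2)^{d/2}}\,f_{E_r}(s_1)f_{E_r}(s_2)\,ds_1\,ds_2.
\]
Two applications of the AM--GM inequality then decouple the integrand: $s_1s_2/(s_1+s_2)\le(s_1+s_2)/4$ splits the exponential as $e^{\nu\|c\|^2 s_1/4}e^{\nu\|c\|^2 s_2/4}$, and $(s_1+s_2)^{d/2}\ge 2^{d/2}(s_1s_2)^{d/4}$ splits the polynomial factor, giving the product structure
\[
I(r)\le\frac{1}{(8\pi\nu)^{d/2}}\Big(\E\big[E_r^{-d/4}\,e^{\nu\|c\|^2 E_r/4}\big]\Big)^2.
\]

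Taylor-expanding the exponential and invoking the scaling identity $\E[E_r^{k-d/4}]=r^{\beta(k-d/4)}\,a_k^d(\beta)$ turns the squared right-hand side into a double series in $k,n\ge 0$ whose $r$-dependence is $r^{\beta(k+n-d/2)}$. Integrating term by term against $e^{-\gamma r}$ produces exactly the quantities $b_{k,n}(\beta)$ of Lemma~\ref{mylemma2}. The Cauchy--Schwarz estimate \eqref{mylemma2Eq1} refactors the double sum as the square of a single sum in $k$, \eqref{mylemma2Eq2} extracts the $\gamma$-dependence in the form $\gamma^{-(1+2\beta(k-d/4))}\Gamma(1+\beta(2k-d/2))$, and Lemma~\ref{mylemma1'} collapses the combination $a_k^d(\beta)\sqrt{\Gamma(1+\beta(2k-d/2))}/k!$ to a factor growing only as $\sqrt{k}\,2^{\beta k}$. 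What remains is a series of the form $\sum_{k\ge 1}\sqrt{k-d/4}\,q^k$ with common ratio $q:=2^\beta\nu\|c\|^2/(4\gamma^\beta)$, plus a finite $k=0$ contribution controlled by $a_0^d(\beta)=\E[D_1^{\beta d/4}]$, which is finite because $d<\beta^{-1}\alpha$ forces $\beta d/4<\beta$.

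The decisive step is the convergence of this series. The hypothesis $(\gamma/2)^\beta>\nu\|c\|^2/2$ rewrites as $\nu\|c\|^2<2^{1-\beta}\gamma^\beta$, which yields $q<1/2$ and hence absolute summability; the resulting finite sum, squared and multiplied by $\gamma^{\beta d/2-1}(8\pi\nu)^{-d/2}$, is the constant $C_d(c,\gamma,\beta)^2$. The main obstacle, and precisely the reason all three auxiliary lemmas are needed, is calibrating the two AM--GM decouplings so that the $2^{\beta k}$ growth provided by Lemma~\ref{mylemma1'} balances exactly against the $(\nu\|c\|^2/(4\gamma^\beta))^k$ arising from the Taylor coefficients and \eqref{mylemma2Eq2}; any looser pairing of estimates either forfeits the convergence threshold in the hypothesis or fails to converge altogether.
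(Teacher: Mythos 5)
Your proposal is correct and follows essentially the same route as the paper: Walsh isometry, subordination of $G$ through $f_{E_r}$, decoupling of the squared kernel into $\frac{1}{(8\pi\nu)^{d/2}}\bigl(\E[E_r^{-d/4}e^{\nu\|c\|^2E_r/4}]\bigr)^2$, Taylor expansion with the scaling identity for $\E[E_r^{k-d/4}]$, and then Lemmas \ref{mylemma1'} and \ref{mylemma2} to resum the resulting double series under $(\gamma/2)^\beta>\nu\|c\|^2/2$. The only (cosmetic) difference is the decoupling step, where you compute the exact Gaussian cross-integral and apply AM--GM twice, while the paper applies Cauchy--Schwarz in the $y$-integral; both yield the identical intermediate bound, and your explicit check that $a_0^d(\beta)=\E[D_1^{\beta d/4}]<\infty$ (from $d<4$) is a small point the paper leaves implicit.
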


Using  Lemma\ref{mylemma1},Lemma\ref{mylemma1'}, Lemma\ref{mylemma2} and the last two propositions, we are now ready to give the proof of  \ref{maintheoremtwo}.
\begin{proof}
Using $p_{u}(y)=\frac{e^{-\frac{\|y\|^{2}}{4u\nu}}}{(4\pi u\nu)^{\frac{d}{2}}},$  direct computations yield
\[
\int_{\R^{d}}e^{-c.y}\left[ p_{u}(y) \right]^2dy= \frac{e^{\frac{u\nu\|c\|^{2}}{2}}}{(8\pi u\nu)^{\frac{d}{2}}}.
\]
Observe that
\begin{eqnarray}
[G_s(y)]^2 &=& \int_0^\infty p_u(y)f_{E_s}(u)du\int_0^\infty p_v(y)f_{E_s}(v)dv\\ \nonumber
&=&\int_0^\infty\int_0^\infty p_u(y)p_v(y)f_{E_s}(u)f_{E_s}(v)dudv.
\end{eqnarray}
We use Holder's inequality to obtain that
\begin{eqnarray}\label{pro1}
 &      & \int_{\R^{d}}e^{-c.y}\left[ G_{s}(y) \right]^{2}dy \nonumber\\
 & = & \int_{\R^{d}}\int_0^\infty\int_0^\infty\left(e^{-\frac{c.y}{2}}  p_u(y)f_{E_s}(u)\right)\left(e^{-\frac{c.y}{2}}p_v(y)f_{E_s}(v)\right)dudvdy\nonumber\\
 &  = &   \int_0^\infty\int_0^\infty\int_{\R^{d}}\left[\left(e^{-\frac{c.y}{2}}  p_u(y)\right)\left(e^{-\frac{c.y}{2}}p_v(y)\right)dy\right]f_{E_s}(u)f_{E_s}(v)dudv\nonumber\\
 & \leq &  \int_0^\infty\int_0^\infty\left(\int_{\R^{d}}e^{-c.y}\left[p_u(y)\right]^{2}dy\right)^{\frac{1}{2}}\left(\int_{\R^{d}}e^{-c.y}\left[p_v(y)\right]^{2}dy\right)^{\frac{1}{2}}f_{E_s}(u)f_{E_s}(v)dudv\nonumber\\
 & = & \left[\int_0^\infty\left(\int_{\R^{d}}e^{-c.y}\left[p_u(y)\right]^{2}dy\right)^{\frac{1}{2}}f_{E_s}(u)du\right]^{2}\nonumber\\
&  =  & \left[ \int_{0}^\infty \left(  \frac{e^{\frac{u\nu\|c\|^{2}}{2}}}{(8\pi u\nu)^{\frac{d}{2}}} \right)^{\frac{1}{2}}f_{E_{s}}(u)du  \right]^{2} \nonumber\\
& =  &\frac{1}{(8\pi\nu)^{\frac{d}{2}}}  \left[ \int_{0}^\infty\frac{e^{\frac{u\nu\|c\|^{2}}{4}}}{u^{\frac{d}{4}}}f_{E_{s}}(u)du  \right]^{2} \nonumber\\
& \leq &  \frac{1}{(8\pi\nu)^{\frac{d}{2}}}\left[ \sum_{k=0}^{\infty}\frac{\left( \frac{\nu\|c\|^{2}}{4} \right)^{k}}{k!}\int_{0}^{\infty}u^{k-\frac{d}{4}}f_{E_{s}}(u)du\right]^{2} \nonumber\\
& = &  \frac{1}{(8\pi\nu)^{\frac{d}{2}}}\left[ \sum_{k=0}^{\infty}\frac{\left( \frac{\nu\|c\|^{2}}{4} \right)^{k}}{k!}s^{\beta(k-\frac{d}{4})}\int_{0}^{\infty}w^{-\beta(k-\frac{d}{4})}g_{\beta}(w)dw\right]^{2} \nonumber\\
& = &  \frac{1}{(8\pi\nu)^{\frac{d}{2}}}\left[ \sum_{k=0}^{\infty}\frac{\left( \frac{\nu\|c\|^{2}}{4} \right)^{k}}{k!}s^{\beta(k-\frac{d}{4})}a^{d}_{k}(\beta)\right]^{2} \nonumber\\
 & \leq & \frac{s^{-\frac{\beta d}{2}}}{(8\pi\nu)^{\frac{d}{2}}} \sum_{k,n=0}^{\infty}\frac{a^{d}_{k}(\beta)a_{n}^{d}(\beta)}{n!k!}\left( \frac{\nu\|c\|^{2}}{4}\right)^{k+n}s^{\beta(k+n)}\nonumber\\
\end{eqnarray}
where
$$a^{d}_{k}(\beta)= \int_{0}^{\infty}w^{-\beta(k-\frac{d}{4})}g_{\beta}(w)dw    $$
 From the inequality (\ref{pro1}), we obtain that
\begin{eqnarray}\label{pro2}
&         & \int_{0}^{t}e^{-\gamma s}ds\int_{\R^{d}}e^{-c.y}\left[ G_{s}(y) \right]^{2}dy \nonumber\\
& \leq &  \frac{1}{(8\pi\nu)^{\frac{d}{2}}} \sum_{k,n=0}^{\infty}\frac{a^{d}_{k}(\beta)a_{n}^{d}(\beta)}{n!k!}\left( \frac{\nu\|c\|^{2}}{4}\right)^{k+n}\int_{0}^{t}s^{\beta(k+n-\frac{d}{2})}e^{-\gamma s}ds\nonumber \\
& = & \frac{1}{(8\pi\nu)^{\frac{d}{2}}} \sum_{k,n=0}^{\infty}\frac{a^{d}_{k}(\beta)a_{n}^{d}(\beta)}{n!k!}\left( \frac{\nu\|c\|^{2}}{4}\right)^{k+n}b_{n,k}(\beta)\nonumber \\
\end{eqnarray}
Where $$b_{n,k}(\beta)=\int_{0}^{t}s^{\beta(k+n-\frac{d}{2})}e^{-\gamma s}ds.$$

 Combining inequality (\ref{pro2}) and inequality (\ref{mylemma2Eq1}) of Lemma\ref{mylemma2} we obtain that
\begin{eqnarray*}
&        & \int_{0}^{t}e^{-\gamma s}\left[ \int_{ \R^{d} } e^{-c.y}\left[ G_{s}(y) \right]^{2}dy\right]ds   \nonumber \\
& \leq &  \frac{1}{(8\pi\nu)^{\frac{d}{2}}} \sum_{k,n=0}^{\infty}\frac{a^{d}_{k}(\beta)a_{n}^{d}(\beta)}{n!k!}\left( \frac{\nu\|c\|^{2}}{4}\right)^{k+n}\left[b_{k,k}(\beta)\right]^{\frac{1}{2}}\left[b_{n,n}(\beta)\right]^{\frac{1}{2}}\nonumber\\
& = &  \frac{1}{(8\pi\nu)^{\frac{d}{2}}} \left[\sum_{k=0}^{\infty}\frac{a^{d}_{k}(\beta)}{k!}\left( \frac{\nu\|c\|^{2}}{4}\right)^{k}\left[b_{k,k}(\beta)\right]^{\frac{1}{2}}\right]^{2}\nonumber\\
\end{eqnarray*}
Using inequality(\ref{mylemma2Eq2}) of Lemma\ref{mylemma2}, the last inequality can be improved to
\begin{eqnarray}\label{pro3}
&        & \int_{0}^{t}e^{-\gamma s}\left[ \int_{ \R^{d} } e^{-c.y}\left[ G_{s}(y) \right]^{2}dy\right]ds   \nonumber \\
& \leq &  \frac{1}{(8\pi\nu)^{\frac{d}{2}}} \left[\sum_{k=0}^{\infty}\frac{a^{d}_{k}(\beta)\sqrt{\Gamma(1+\beta(2k-\frac{d}{2}))}}{k!}\left( \frac{\nu\|c\|^{2}}{4}\right)^{k} \left(\frac{1}{\gamma}\right)^{\frac{1}{2}+\beta(k-\frac{d}{4})} \right]^{2}\nonumber\\
\end{eqnarray}
Next, using inequality(\ref{mylemma1Eq2}) of Lemma\ref{mylemma1'} and the fact that $\sqrt{2k-\frac{d}{2}} \leq 2^{k}$ for every $k\geq1$, inequality(\ref{pro3}) becomes
\begin{eqnarray}\label{pro4}
&        & \int_{0}^{t}e^{-\gamma s}\left[ \int_{ \R^{d} } e^{-c.y}\left[ G_{s}(y) \right]^{2}dy\right]ds   \nonumber \\
    & \leq &  \frac{\gamma^{\frac{\beta d}{2}-1}}{(8\pi\nu)^{\frac{d}{2}}} \left[a_{0}^{d}(\beta)\sqrt{\Gamma(1-\frac{\beta d}{2})}+ \frac{3\sqrt{\beta\Gamma(2\beta(1-\frac{d}{4}))}}{2^{\beta}\Gamma(1+\beta)} \sum_{k=1}^{\infty}\left(  \frac{2^{\beta}\nu\|c\|^{2} }{4\gamma^{\beta}} \right)^{k}\sqrt{2k-\frac{d}{2}} \right]^{2}\nonumber\\
 & \leq &  \frac{\gamma^{\frac{\beta d}{2}-1}}{(8\pi\nu)^{\frac{d}{2}}} \left[a_{0}^{d}(\beta)\sqrt{\Gamma(1-\frac{\beta d}{2})}+ \frac{3\sqrt{\beta\Gamma(2\beta(1-\frac{d}{4}))}}{2^{\beta}\Gamma(1+\beta)} \sum_{k=1}^{\infty}\left(  \frac{2^{\beta+1}\nu\|c\|^{2} }{4\gamma^{\beta}} \right)^{k}\right]^{2}\nonumber\\
 & \leq &  \frac{M^{2}\gamma^{\frac{\beta d}{2}-1}}{(8\pi\nu)^{\frac{d}{2}}} \left[1+ \sum_{k=1}^{\infty}\left(  \frac{2^{\beta-1}\nu\|c\|^{2} }{\gamma^{\beta}} \right)^{k} \right]^{2}\nonumber\\
 & = & \left[  \frac{M\gamma^{\frac{\beta d}{4}-\frac{1}{2}}}{(8\pi\nu)^{\frac{d}{4}}} \sum_{k=0}^{\infty}\left(  \frac{2^{\beta-1}\nu\|c\|^{2} }{\gamma^{\beta}} \right)^{k} \right]^{2}\nonumber\\
\end{eqnarray}
where
$$
M=\max\left\{a_{0}^{d}(\beta)\sqrt{\Gamma(1-\frac{\beta d}{2})},  \frac{3\sqrt{\beta\Gamma(2\beta(1-\frac{d}{4}))}}{2^{\beta}\Gamma(1+\beta)}\right\}
$$
The last series converges if and only if $ \left(\frac{\gamma}{2}\right)^{\beta}> \frac{\nu\|c\|^{2}}{2}. $
Therefore from inequality(\ref{pro4}), we obtain that
\begin{eqnarray}\label{pro5}
&&e^{-\gamma t + c.x}\E(|(G\circledast \Phi)_t(x)|^2)\nonumber\\
&=& e^{-\gamma t + c.x}\int_{0}^{t}\left[ \int_{\R^{d}}\left[G_{t-s}(y-x) \right]^{2}\E(| \Phi_{s}(y)|^{2})dy  \right]ds \nonumber\\
&\leq & \left[ \mathcal{N}_{\gamma, c}(\Phi) \right]^{2}\int_{0}^{t}e^{-\gamma s}\int_{\R^{d}}e^{-c.y}\left[ G_{s}(y)\right]^{2}dyds \nonumber\\
&\leq&  \left[ \mathcal{N}_{\gamma, c}(\Phi) \right]^{2}\left[  \frac{M\gamma^{\frac{\beta d}{4}-\frac{1}{2}}}{(8\pi\nu)^{\frac{d}{4}}} \sum_{k=0}^{\infty}\left(  \frac{2^{\beta-1}\nu\|c\|^{2} }{\gamma^{\beta}} \right)^{k} \right]^{2}\nonumber\\
\end{eqnarray}
The right-hand side is independent of $(x,t).$ Therefore, by optimizing over $(x,t)$ and then square roots of both side , we get
\begin{equation*}
\mathcal{N}_{\gamma,c}(G\circledast \Phi)\leq C_{d}(c,\gamma,\beta)\mathcal{N}_{\gamma,c}(\Phi)
\end{equation*}
 with
\begin{equation}\label{pro6}
C_{d}(c,\gamma,\beta)= \frac{M\gamma^{\frac{\beta d}{4}-\frac{1}{2}}}{(8\pi\nu)^{\frac{d}{4}}} \sum_{k=0}^{\infty}\left(  \frac{2^{\beta-1}\nu\|c\|^{2} }{\gamma^{\beta}} \right)^{k}
                                      =\frac{M\gamma^{\frac{\beta d}{4}-\frac{1}{2}}}{(8\pi\nu)^{\frac{d}{4}}\left(1- \frac{2^{\beta-1}\nu\|c\|^{2} }{\gamma^{\beta}}\right)}.
\end{equation} Which complete the proof of Proposition \ref{propostionforInt}.
\end{proof}
The next corollary is a generalization  of Corollary 4.4 in \cite{JebesaAndNane1}.
\begin{coro}\label{coro3}
If $\|c\|^{1/\beta-d/2}>\mbox{Lip}_\sigma \sqrt{\frac{M^{2}}{(2\nu)^{\frac{1}{\beta}-\frac{d}{2}}  (1-2^{\beta-2})^{2}(8\pi\nu)^{\frac{d}{2}}}}$, then the solution to the tfspde \eqref{tfspde} for  $\a=2$ satisfies
\begin{equation}
\E(|u_t(x)|^2)\leq A(\|c\|,\beta)\exp\left(-\|c\|\|x\| + (2\nu c^2)^{1/\beta}t\right),
\end{equation}
simultaneously for all $x\in\R^{d}$ and $t\geq 0,$ where $A(\|c\|,\beta)$ is a finite constant that depends only on $\|c\|$ and $\beta.$
\end{coro}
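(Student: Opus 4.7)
The plan is to apply Proposition \ref{propostionforInt} to the mild formulation
$$u_t(x) = (G*u_0)_t(x) + (G\circledast \sigma(u))_t(x).$$
Fix $c\in\R^d$ with $\|c\|$ satisfying the hypothesized lower bound, and set $\gamma := (2\nu\|c\|^2)^{1/\beta}$. Then $(\gamma/2)^\beta = 2^{1-\beta}\nu\|c\|^2 > \nu\|c\|^2/2$ since $\beta<2$, so Proposition \ref{propostionforInt} is applicable. With this choice $\gamma^{\beta d/4-1/2} = (2\nu)^{d/4-1/(2\beta)}\|c\|^{d/2-1/\beta}$; substituting into formula \eqref{pro6} and isolating $\|c\|^{1/\beta-d/2}$ (which is legitimate because $d<2/\beta$ forces the exponent to be positive) shows that the hypothesis is exactly equivalent to $\mbox{Lip}_\sigma\,C_d(c,\gamma,\beta)<1$.

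Because $\mathcal{N}_{\gamma,c}$ is subadditive by Minkowski's inequality in $L^2(\P)$, combining this with Proposition \ref{propostionforInt} and the Lipschitz bound $|\sigma(u)|\leq\mbox{Lip}_\sigma |u|$ (valid since $\sigma(0)=0$) yields
$$\mathcal{N}_{\gamma,c}(u)\ \leq\ \mathcal{N}_{\gamma,c}(G*u_0)\ +\ \mbox{Lip}_\sigma\,C_d(c,\gamma,\beta)\,\mathcal{N}_{\gamma,c}(u).$$
A Picard iteration $u^{(n+1)}=G*u_0+G\circledast\sigma(u^{(n)})$ in the norm $\mathcal{N}_{\gamma,c}$ becomes a contraction by the above, so the fixed point is the solution $u$ and the bound $\mathcal{N}_{\gamma,c}(u)\leq\mathcal{N}_{\gamma,c}(G*u_0)/(1-\mbox{Lip}_\sigma C_d)=:A(\|c\|,\beta)^{1/2}$ holds. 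Unpacking the definition gives $\E|u_t(x)|^2\leq A(\|c\|,\beta)\,e^{\gamma t-c\cdot x}$. Since the right-hand side depends on $c$ only through its norm, for each $x\in\R^d$ we choose $c$ parallel to $x$ so that $c\cdot x=\|c\|\|x\|$, producing the announced bound $\E|u_t(x)|^2\leq A(\|c\|,\beta)\,e^{(2\nu\|c\|^2)^{1/\beta}t-\|c\|\|x\|}$.

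The main technical step — and the one I expect to require real care — is bounding $\mathcal{N}_{\gamma,c}(G*u_0)$ by a quantity depending on $c$ only through $\|c\|$. The approach is to apply Cauchy--Schwarz to get $|G*u_0(x)|^2\leq\|u_0\|_\infty\int G_t(x-y)|u_0(y)|dy$, then use Lemma \ref{laplace-G} together with the compactness of $\mathrm{supp}(u_0)\subset\{y:\|y\|\leq R\}$ to estimate
$$e^{c\cdot x}|G*u_0(x)|^2\ \leq\ \|u_0\|_\infty^2\,e^{R\|c\|}\,E_\beta(\nu\|c\|^2 t^\beta).$$
The Mittag-Leffler asymptotic $E_\beta(z)\sim e^{z^{1/\beta}}/\beta$ together with $\gamma^\beta=2\nu\|c\|^2>\nu\|c\|^2$ ensures $\sup_{t\geq 0}e^{-\gamma t}E_\beta(\nu\|c\|^2 t^\beta)<\infty$, confirming the required finiteness and, crucially, isotropy of the bound. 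This isotropy is exactly what allows the final directional optimization to produce $-\|c\|\|x\|$ in the exponent, and is the subtle point to verify at every stage of the estimate.
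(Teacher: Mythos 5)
Your proposal is correct and follows essentially the same route as the paper: apply Proposition \ref{propostionforInt} with $\gamma^\beta=2\nu\|c\|^2$, observe that the hypothesis on $\|c\|$ is exactly $\mbox{Lip}_\sigma C_d(c,\gamma,\beta)<1$, control the deterministic term via Lemma \ref{laplace-G} and the compact support of $u_0$, close the Picard iteration as a contraction in $\mathcal{N}_{\gamma,c}$, and finally optimize over the direction of $c$ to replace $c\cdot x$ by $\|c\|\|x\|$. The only cosmetic differences are your use of the Mittag--Leffler asymptotic where the paper gives an elementary term-by-term bound $e^{-\gamma t}E_\beta(\nu\|c\|^2t^\beta)\le 2$, and your Cauchy--Schwarz treatment of $|G_t\ast u_0|^2$, which is if anything slightly more careful.
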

\begin{proof}
The proof generalizes some of the ideas used in the proof of Corollary 4.4 in \cite{JebesaAndNane1}.
Recall that for all $\gamma > 0$
\begin{eqnarray}
\mathcal{N}_{\gamma,c}(u^{(n+1)})&\leq& [\mathcal{N}_{\gamma,c}(G_t\ast u_0)] + [\mathcal{N}_{\gamma,c}(G\circledast \sigma(u^{(n)})]\nonumber\\
&\leq&[\mathcal{N}_{\gamma,c}(G_t\ast u_0)] + C_{d}(\|c\|,\gamma,\beta)[\mathcal{N}_{\gamma,c}(\sigma(u^{(n)}))],\nonumber
\end{eqnarray}
using Proposition \ref{propostionforInt}. Because $\sigma(z)\leq \mbox{Lip}_\sigma|z|$ for all $z\in\R^{d},$
$$\mathcal{N}_{\gamma,c}(\sigma(u^{(n)})\leq \mbox{Lip}_\sigma\mathcal{N}_{\gamma,c}(u^{(n)}).$$
Also,
\begin{eqnarray}
e^{-\gamma t + c.x}(G_t\ast |u_0|)(x)&=&e^{-\gamma t}\int_{\R^{d}}G_t(y-x)e^{-c.(y-x)}e^{cy}|u_0(y)|dy\nonumber\\
&\leq& e^{-\gamma t}\mathcal{N}_{0,c}(u_0)\int_{\R^{d}}e^{c.z}G_t(z)dz\nonumber\\
&=& e^{-\gamma t}E_\beta(\nu \|c\|^2t^\beta)\mathcal{N}_{0,c}(u_0).
\end{eqnarray}
We take $\gamma^\beta := 2\nu\| c\|^2$ to see that for all integers $k\geq 0$
$$e^{-\gamma t}E_\beta(\nu \|c\|^2t^\beta) = \sum_{k=0}^\infty \frac{\nu^k \|c\|^{2k}t^{\beta k}e^{-\gamma t}}{\Gamma(1+\beta k)}=\sum_{k=0}^\infty \frac{\nu^k \|c\|^{2k}}{\gamma^{\beta k}}\frac{u^{\beta k}e^{-u}}{\Gamma(1+\beta k)}\leq 2,$$
since $\frac{u^{\beta k}e^{-u}}{\Gamma(1+\beta k)}<1.$  From equation \ref{pro5}
$$  C_{d}(\|c\|,\gamma,\beta)=\frac{M\gamma^{\frac{\beta d}{4}-\frac{1}{2}}}{(8\pi\nu)^{\frac{d}{4}}\left(1- \frac{2^{\beta-1}\nu\|c\|^{2} }{\gamma^{\beta}}\right)}
                                                 =\frac{M(2\nu)^{\frac{d}{4}-\frac{1}{2\beta}}\|c\|^{\frac{d}{2}-\frac{1}{\beta}}}{(1-2^{\beta-2})(8\pi\nu)^{\frac{d}{4}}}.  $$
and by our assumption $$C_{d}(\|c\|,\gamma,\beta)\mbox{Lip}_\sigma<1.$$
For every $ \theta \in [0, 2\pi ]$ and $\varphi\in [0, \pi ]$,  we define  $ c(\theta, \varphi)$
\footnote{ For $d=2$ we define $ c(\theta):=\|c\|(\cos(\theta), \sin(\theta))$  for every $\theta\in [0, 2\pi]$ while for the case $d=1$ we consider $c(\theta):=|c|\cos(\theta) $ where $\theta\in \{0,\pi\} $ }
by the expression
$$ c( \theta, \varphi):=\|c\| (\cos(\varphi) \cos(\theta),\cos(\varphi) \sin(\theta), \sin(\varphi)).$$
 Since for every $\theta\in[0, 2\pi]$ and $\varphi\in[0, \pi],$ we have
$ \|c(\theta,\varphi)\|= \|c\|,$ then   for  all integers $n\geq 0$  we have
\begin{equation}\label{Eq20}
\mathcal{N}_{(2\nu \|c\|^2)^{1/\beta},c(\theta,\varphi)}(u^{(n+1)})\leq 2\mathcal{N}_{0,c(\theta,\varphi)}(u_{0}) + C_{d}(\|c\|,\gamma,\beta)\mbox{Lip}_\sigma\mathcal{N}_{(2\nu \|c\|^2)^{1/\beta},c(\theta,\varphi)}(u^{(n)}).\end{equation}
Since $u_0$ has compact support, there is some constant $R>0$ such that $ u_{0}(x)=0$ whenever $\|x\|\geq R.$ Hence we obtain that
\begin{eqnarray*}
\sup_{\theta\in[0, 2\pi]}\sup_{\varphi\in[0, \pi]}\mathcal{N}_{0,c(\theta,\varphi)}(u_0):
&=&\sup_{\theta\in[0, 2\pi]}\sup_{\varphi\in[0, \pi]}\sup_{x\in\R^{d}} e^{c(\theta,\varphi).x}u_0(x)\nonumber\\
&=& \sup_{\theta\in[0, 2\pi]}\sup_{\varphi\in[0, \pi]}\sup_{x\in supp(u_{0})} e^{c(\theta,\varphi).x}u_0(x)\nonumber\\
&\leq & \sup_{x\in supp(u_{0})} e^{\|c(\theta,\varphi)\|\|x\|}u_0(x)\nonumber\\
&=& \sup_{x\in supp(u_{0})} e^{\|c\|\|x\|}u_0(x)\nonumber\\
&\leq & e^{R\|c\|}\|u_{0}\|_{\infty}.\nonumber\\
\end{eqnarray*}

 Since $e^{R\|c\|}\|u_{0}\|_{\infty}<\infty,$  it follows from inequality(\ref{Eq20}) that
$$\sup_{n\geq 0}\sup_{\theta\in[0, 2\pi]}\sup_{\varphi\in[0, \pi]}\mathcal{N}_{(2\nu \|c\|^2)^{1/\beta},c(\theta,\varphi)}(u^{(n+1)})<\infty.$$
Since $u_t^{(n+1)}(x)$ converges to $u_t(x)$ in $L^2(\Omega)$ as $n\rightarrow \infty$  Fatou's lemma implies that
$$\sup_{\theta\in[0, 2\pi]}\sup_{\varphi\in[0, \pi]}\mathcal{N}_{(2\nu \|c\|^2)^{1/\beta},c(\theta, \varphi)}(u)<\infty.$$
Since every $x\in\R^{d}$ can be written as $x=\|x\|(\cos(\phi_{x})\cos(\theta_{x}),\cos(\phi_{x})\sin(\theta_{x}),\sin(\phi_{x}))$ and the preceding supremum is independent of $\theta$ and $\phi$, then in particular for $\theta=\theta_{x} $ and $\phi=\phi_{x}$ we obtain that $c(\theta_{x},\phi_{x}).x=\|c\|\|x\|.$ The corollary follows readily from this fact.
\end{proof}
We are ready to prove Theorem \ref{maintheoremtwo}. We do this  in two steps adapting the method in \cite[Chapter 8]{khoshnevisan-cbms} with crucial nontrivial changes: First we derive \eqref{intermittencypart1}; and then we establish \eqref{intermittencypart2}.
\begin{proof}[{\bf Proof of  \eqref{intermittencypart1}}]
 Since $u_0$ has compact support, it follows that $|u_0(x)|= O(e^{\|c\|\|x\|})$ for all $\|c\|>0.$ Therefore, we may apply Corollary \ref{coro3} to an arbitrary
$\|c\|^{1/\beta-d/2}>\mbox{Lip}_\sigma \sqrt{\frac{M^{2}}{(2\nu)^{\frac{1}{\beta}-\frac{d}{2}}  (1-2^{\beta-2})^{2}(8\pi\nu)^{\frac{d}{2}}}}:= \mbox{Lip}_\sigma c_0$ in order to see that
\begin{eqnarray}
\mathscr{L}(\theta)&=& \limsup_{t\rightarrow\infty}\frac{1}{t}\sup_{\|x\|>\theta t}\log \E\left(|u_t(x)|^2\right)\leq - \sup_{\|c\|>(\mbox{Lip}_\sigma c_0)^{2\beta/(2-\beta d)}}\bigg[\theta\| c\| - (2\nu\|c\|^2)^{1/\beta}\bigg]\nonumber\\
&\leq&-\bigg[\theta (\mbox{Lip}_\sigma c_0)^{2\beta/(2-\beta d)}) - (2\nu)^{1/\beta}(\mbox{Lip}_\sigma c_0)^{4/(2-\beta d)})\bigg],
\end{eqnarray}
obtained by setting $\|c\|:= (\mbox{Lip}_\sigma c_0)^{2\beta/(2-\beta d)}$ in the maximization problem of the first line of preceding display. The right-most quantity is strictly negative when
$$\theta >  (2\nu)^{1/\beta}(\mbox{Lip}_\sigma c_0)^{2(\frac{2-\beta}{2-\beta d})};$$
this proves\eqref{intermittencypart1}.
\end{proof}
\begin{proof}[{\bf Proof of  \eqref{intermittencypart2}.}] We have that
\begin{eqnarray}\label{intermittencyproof}
&&\E(|u_t(x)|^2)\nonumber\\
&\geq&|(G_t\ast u_0)(x)|^2 + L_\sigma^2\int_0^tds\int_{\R^{d}}dy[G_{t-s}(y-x)]^2\E(|u_s(y)|^2).
\end{eqnarray}
For all $t>0$ and $x\in\R^{d}.$ Define $\mathbb{K}^{+}_{\theta t}: =\mathbb{R}^{d-1}\times[\theta t, \infty)$,  $\mathbb{K}^{-}_{\theta t}: =\mathbb{R}^{d-1}\times(-\infty, -\theta t]$ and
$\mathbb{K}_{\theta t}=\mathbb{K}^{+}_{\theta t}\cup \mathbb{K}^{-}_{\theta t} $  for every $t>0$ and $\theta>0.$ Then  if $x,y\in\R^{d}, 0\leq s\leq t,$ and $\theta\geq 0,$ we have
$$1_{\mathbb{K}^{+}_{\theta t}}(x)\geq 1_{\mathbb{K}^{+}_{\theta(t-s)}}(x-y)\cdot 1_{\mathbb{K}^{+}_{\theta s}}(y).$$
This is a consequence of the triangle inequality. Therefore,
\begin{eqnarray}
&&\int_{\mathbb{K}^{+}_{\theta t}}\int_0^tds\int_{\R^{d}}dy[G_{t-s}(y-x)]^2\E(|u_s(y)|^2)\nonumber\\
& = & \int_0^tds\int_{\R^{d}\times \R^{d}}dy[G_{t-s}(y-x)]^2\E(|u_s(y)|^2)1_{\mathbb{K}^{+}_{\theta t}}(x)dydx \nonumber\\
&\geq& \int_0^tds\left(\int_{\mathbb{K}^{+}_{\theta(t-s)}}[G_{t-s}(y)]^2 dy\right)\left( \int_{\mathbb{K}^{+}_{\theta s}}\E(|u_s(y)|^2)dy\right)\nonumber\\
\end{eqnarray}
This and \eqref{intermittencyproof} together show that the function
\begin{equation}
M_+(t) := \int_{\mathbb{K}^{+}_{\theta t}} \E(|u_t(y)|^2)dy
\end{equation}
satisfies the following renewal inequality:
\begin{equation}\label{positivepart}
M_+(t)\geq \int_{\mathbb{K}^{+}_{\theta t}}|(G_t\ast u_0)(x)|^2dx + L_\sigma^2(T\ast M_+)(t),
\end{equation}
with $$T(t) := \int_{\mathbb{K}^{+}_{\theta t} }[G_t(z)]^2dz.$$
Because of symmetry we can write $T(t) = \int_{\mathbb{K}^{-}_{\theta t}}[G_t(z)]^2dz.$ Therefore, a similar argument shows that the function
$$M_{-}(t) := \int_{\mathbb{K}^{-}_{\theta t}}\E(|u_s(y)|^2)dy,$$
satisfies the following renewal inequality:
\begin{equation}\label{negativepart}
M_{-}(t)\geq \int_{\mathbb{K}^{-}_{\theta t}}|(G_t\ast u_0)(x)|^2dx + L_\sigma^2(T\ast M_{-})(t).
\end{equation}
Define
$$M(t) := \int_{\mathbb{K}_{\theta t}} \E(|u_t(y)|^2)dy = M_+(t) + M_{-}(t),$$
Define $\mathcal{L}\phi$ to be the Laplace transform of any measurable function $\phi:\R_+\rightarrow\R_+.$ That is,
$$(\mathcal{L}\phi)(\lambda) = \int_0^\infty e^{-\lambda t}\phi(t)dt\ \ \ (\lambda \geq 0).$$
Then, we have the following inequality of Laplace transforms: For every $\lambda\geq 0,$
\begin{eqnarray}\label{Eq200}
(\mathcal{L}M)(\lambda)& = & (\mathcal{L}M_{+})(\lambda)+  (\mathcal{L}M_{-})(\lambda)\nonumber  \\
&\geq& \int_0^\infty e^{-\lambda t}dt\int_{\mathbb{K}_{\theta t}} dx |(G_t\ast u_0)(x)|^2 + L_\sigma^2 (\mathcal{L}T)(\lambda)(\mathcal{L}M)(\lambda).\nonumber\\
\end{eqnarray}
Since
 \begin{eqnarray*}
 \lim_{\theta\rightarrow 0}\int_{\mathbb{K}_{\theta t}}\left| (G_{t})(x)\right|^2dx
 & = & \int_{\R^{d}}\left| (G_{t})(x)\right|^2dx\nonumber \\
 & = & C^{*}t^{-\frac{\beta d}{\alpha}}\nonumber\\
\end{eqnarray*}
 where the second equality follows from Lemma\ref{Lem:Green1}. On the other hand we have that
  $$(\mathcal{L}T)(0) = \int_0^\infty dt\int_{\mathbb{K}_{\theta t}}[G_t(z)]^2dz.$$
  Since $$\int_{0}^{\infty}C^{*}t^{-\frac{\beta d}{\alpha}}dt=\infty,  $$ then we obtain that
  $$
  \lim_{\theta\rightarrow 0}(\mathcal{L}T)(0)= \infty.
  $$

Therefore, there exists $\theta_0>0$ such that $(\mathcal{L}T)(0)>L_\sigma^{-2}$ whenever $\theta\in(0,\theta_0).$ This and dominated convergence theorem together imply that there, in turn, will exist $\lambda_0>0$ such that $(\mathcal{L}T)(\lambda)> L_\sigma^{-2}$ whenever $\theta\in(0,\theta_0)$ and $\lambda\in(0,\lambda_0).$
 Since $u_0>0$ on a set of positive measure, it follows readily that $$\int_0^\infty e^{-\lambda t}dt\int_{\mathbb{K}^{+}_{\theta t}} dx |(G_t\ast u_0)(x)|^2 > 0,$$
for all $\theta, \lambda\geq 0,$ including $\theta\in(0,\theta_0)$ and $\lambda\in(0,\lambda_0).$ Therefore, (\ref{Eq200}) implies that
\begin{equation}
(\mathcal{L}M)(\lambda) = \infty\ \ \ \mbox{for}\ \theta\in(0,\theta_0)\ \mbox{and}\ \lambda\in(0,\lambda_0).
\end{equation}
Combining this with the fact that
$$ \int_{|y|>\theta t}\E(|u_t(y)|^2)dy\geq \int_{\mathbb{K}_{\theta t}} \E(|u_t(y)|^2)dy =  M(t) ,$$
one can deduce from this and the definition of $M$ that
$$\limsup_{t\rightarrow\infty}e^{-\lambda t}\int_{|y|>\theta t}\E(|u_t(y)|^2)dy = \infty,$$
whenever $\theta\in(0,\theta_0)\ \mbox{and}\ \lambda\in(0,\lambda_0).$ This and the already-proven first part \eqref{intermittencypart1} together show that
$$\limsup_{t\rightarrow\infty}e^{-\lambda t}\int_{\theta t <|y|< \gamma t}\E(|u_t(y)|^2)dy = \infty,$$
whenever $\theta\in(0,\theta_0), \lambda\in(0,\lambda_0) \ \mbox{and}\ \gamma > (2\nu)^{1/\beta}(\mbox{Lip}_\sigma c_0)^{2(\frac{2-\beta}{2-\beta d})}.$ Since the last integral is not greater than $(\gamma - \theta)t\sup_{|x|>\theta t}\E(|u_t(x)|^2),$ it follows that
$$\mathscr{L}(\theta)=\limsup_{t\rightarrow\infty}\frac{1}{t}\sup_{|x|>\theta t}\log \E(|u_t(x)|^2) \geq \lambda_0,$$
for $\theta\in(0,\theta_0).$ This proves \eqref{intermittencypart2} and hence the theorem.

\end{proof}





\end{document}